\newtheorem{theorem}{Theorem}[section]
\newtheorem{definition}[theorem]{Definition}
\newtheorem{cor}[theorem]{Corollary}
\newtheorem{lem}[theorem]{Lemma}
\newtheorem{pro}[theorem]{Proposition}
\numberwithin{equation}{section}
\DeclareMathOperator{\Hi}{O}
\theoremstyle{definition}
\newtheorem{example}[theorem]{Example} 
\newtheorem{remark}[theorem]{Remark}}
\begin{document}

\title{\vspace{-1.2cm} \bf Bergman representative coordinate, constant holomorphic curvature and a multidimensional generalization of Carath\'eodory's theorem 
 \rm}  
\author{Robert Xin Dong \quad  and  \quad  Bun Wong}

\date{} 
\maketitle

\begin{abstract}

By using the Bergman representative coordinate and Calabi’s diastasis, we extend a theorem of Lu to bounded pseudoconvex domains whose Bergman metric is incomplete with constant holomorphic sectional curvature. 
We characterize such domains that are biholomorphic to a ball possibly less a relatively closed pluripolar set. 
We also provide a multidimensional generalization of Carath\'eodory's theorem on the continuous extension of the biholomorphisms up to the closures.
In particular,
 sufficient conditions are given, in terms of the Bergman kernel,  for the boundary of a biholomorphic ball to be a topological sphere.

\end{abstract}

\renewcommand{\thefootnote}{\fnsymbol{footnote}}
\footnotetext{\hspace*{-7mm} 
\begin{tabular}{@{}r@{}p{16.5cm}@{}}
& Keywords. Bergman kernel, Bergman metric, Bergman representative coordinate, biholomorphic map, Carath\'eodory's theorem, holomorphic sectional curvature, pluripolar set\\
& Mathematics Subject Classification. Primary 32F45; Secondary 32H10, 32T05, 32D20\\

\end{tabular}}

\section{Introduction}

For a bounded simply-connected domain  $D$  in the complex plane $\mathbb C$,  let  $S: \mathbb D \to D$ be the unique biholomorphism with $S(0) = 0 \in D$ and $S'(0) > 0$, where $\mathbb D$ denotes the unit  disc around the origin. It is well-known that $S$ extends continuously up to $\overline {\mathbb D}$ if and only if $\partial D$ is locally connected, and a celebrated theorem of Carath\'eodory \cite{Cara} states that $S$ extends to a homeomorphism of   $\overline{\mathbb D}$ onto $ \overline D$  if and only if  $\partial D$ is a Jordan curve. In the latter case, if the Jordan curve is $C^\infty$-smooth, then $S$ and all its derivatives extend continuously to   $ \overline {\mathbb D}$ (see \cite{BK}).
  Bergman discovered that the Riemann map associated to  $D $  can be expressed simply in terms of his kernel function $K(z, t)$. (see \cite{Bell06} and \cite[Chap. VI]{Ber70}).
  Let $f$ be  a biholomorphic map  from $D$ onto  $\mathbb D$  such that $f(p) =0$   for some point $p \in D$.
  Then, $K(z, t) =  f^{\prime}(z) \overline{f^{\prime}(t)} \pi^{-1} (1-f(z)\overline{f(t)})^{-2} $ and
     $$  
  K^{-1}(z, p)  \left. \frac{\partial}{\partial \overline {t } } \right|_{t=p} K(z, t)    =   {2 f^{\prime}(p)}  f(z) + \overline{ \frac{ {f^{\prime\prime}(p)}}{ {f^{\prime }(p)}}}.
   $$ 
 Bergman in \cite{Ber} introduced his representative coordinates  as a tool   of generalizing the Riemann mapping theorem to $ \mathbb C^n, n \geq 1$.
      Let  $\Omega \subset \mathbb C^n$  be a bounded domain whose  Bergman metric is denoted by $g$.
Relative to a point $p\in  \Omega$, the Bergman representative coordinate $T (z) =(w_1, ... , w_n)$ is defined as
\begin{equation} \label{rep}
w_{\alpha} (z):=\sum _{j=1}^{n} g^{\bar j \alpha }(p) \left(K^{-1}(z, p)  \left. \frac{\partial}{\partial \overline {t_j} } \right|_{t=p} K(z, t)  -    \left. \frac{\partial}{\partial \overline {t_j} } \right|_{t=p}  \log K(t, t)\right), \quad \alpha = 1, \dotsc, n,
\end{equation}
where  
$K(z , t)$ is the Bergman kernel and $g^{\bar j \alpha }$ are the entries of the inverse of the matrix $[g_{\alpha \bar{j}}]$ associated with the Bergman metric. 
Since the possible obstruction in   \eqref{rep}   is that $K(z , p)$ may have zero,  $T (z)$ is only generally defined and holomorphic outside the zero set of $K(z , p)$. Studying zeros of the Bergman kernel attracted  much interest, and domains for which the Bergman kernel is zero-free are known as Lu Qi-Keng domains (or those satisfying the Lu Qi-Keng conjecture, cf. \cite{Bo}), after  Lu's well-known paper \cite{Lu} on his  uniformization theorem.   In that paper, Lu proved that  for a bounded domain in $\mathbb C^n$ with a complete Bergman metric of constant holomorphic sectional curvature, the Bergman representative coordinate  is a  {\it biholomorphism that  maps  $\Omega$  to a Euclidean ball}. 
   Alternative proofs of  Lu's theorem are available  by following  Bergman's key idea that biholomorphic mappings become linear when represented in his   coordinates,  
   cf. \cite{GKK, Yoo};
see also \cite{CW} for a simplification of Lu's proof by Cheung and the second author.
     Lu's theorem also played a decisive role in    the resolution of the Cheng conjecture, which asserts that for a smoothly bounded strictly pseudoconvex domain, the Bergman metric is K\"{a}hler-Einstein if and only if the domain is biholomorphic to a ball, and was recently confirmed by Huang and Xiao \cite{HX1, HX}, after  the previous works of Fu and the second author \cite{FW} and Nemirovski and Shafikov \cite{NS}. See also the related work of Li \cite{Li}.
  In all the above proofs,  the completeness of the Bergman metric was crucial.

        \medskip

       \medskip

We aim to extend Lu's theorem towards the Bergman-incomplete situation. 
One motivation comes from the remarkable progresses in the past decade around the sharp $L^2$ extension theorems of the Ohsawa-Takegoshi type (see \cite{OT, Bl13, GZ, BL}, the survey papers \cite{Bl, O20, O20B, Z}, and the references therein), which particularly solved a long-standing conjecture of
 Suita \cite{Su}. Its equality part  characterizes   Riemann surfaces that are 
 {\it biholomorphic to a disc possibly less a relatively closed polar set.}
Here,  a polar set is the local singularity set of a subharmonic function. 
In fact, the biholomorphism can be expressed in terms of the  Bergman representative coordinate (see \cite[Chap. 4]{O18}) as the possible polar part is negligible for $L^2$ holomorphic 1-forms.
On the other hand, we use  Bergman's coordinates to  generalize Carath\'eodory's theorem by studying the continuous extension of biholomorphism up to the closures of higher dimensional  pseudoconvex domains; this particularly generates multidimensional  results analogous to the equality  part of Suita's conjecture.
Our study  mostly focuses on domains with worse boundaries, since a bounded domain which is complete with respect to the Bergman metric
 is pseudoconvex (see  \cite{Bre}), and 
     the Bergman completeness holds for  any bounded pseudoconvex  domain with H\"older boundary in $\mathbb C^n$ (see  \cite{AHP, Chen}).  
  By using ideas from Lu’s original paper as well as Calabi’s concept of diastasis, the authors in \cite{DWo} substituted the Bergman completeness with other conditions before dropping it completely.  There, the authors showed that a bounded pseudoconvex  domain $\Omega \subset \mathbb C^n$ whose Bergman metric has negative constant holomorphic sectional curvature is  a Lu Qi-Keng domain, and such $\Omega$ must be {\it  biholomorphic to a ball possibly less a relatively closed pluripolar  set} if there exists some point $p \in \Omega$ such that 
  \begin{enumerate}
 \item [$1.$]    $|K(z, p)|$ is bounded from above by a finite constant $\mathcal C>0$ for any $z \in \Omega$, and   
 \item [$2.$]    the Bergman representative coordinate $T $ relative to $p$ is continuous up to $ \overline\Omega$.
 \end{enumerate}

         \medskip
         
  In this paper, we are able to drop the above second condition for planar domains. To state our results on  the continuous extension of biholomorphism up to the closures, we consider the following topological condition.

 \begin{definition} \label{def-local connect}
  Let $\Omega$ be a bounded domain in  $ \mathbb C^n, n \ge 1$.
We say $\Omega$   satisfies  local $C^1$-connectivity around $q \in \partial \Omega$, if 
 for any $\epsilon >0$, 
there exists an open set $U_{\epsilon} \ni q$ such that for any $x, y \in (U_\epsilon \cap \Omega)$, there exists a piece-wise $C^1$  path $\gamma_{x, y}  \subset \Omega$  joining $x$ and $y$ and the length of $\gamma_{x,y}$ is less than $\epsilon$.

  \end{definition}

          \medskip

Examples of domains satisfying the  local $C^1$-connectivity include  convex domains,  strictly pseudoconvex domains, domains with piecewise $C^1$-smooth boundaries, and 
domains that are locally $C^1$-diffeomorphic to a convex domain, etc.
 See Remark \ref{Example C^1 conn}.   
 
       \medskip

\begin{theorem}  \label{1-dim}   Let   $\Omega \subset \mathbb C $ be a bounded domain whose Bergman metric  has  Gaussian curvature identically equal to $- 2$. If there exists some point   $p \in \Omega$ such that 
     $
  \left|       K(z, p)       \right|  
 $
is  bounded from above by a finite constant $\mathcal C_1>0$ for any $z  \in \Omega$, then the Bergman representative coordinate $T (z) $ relative to $p$  is biholomorphic from $\Omega$   to a disc possibly less a relatively closed  polar set. Moreover,  
 \begin{enumerate}
 \item [$1.$]    if
 $
  \left|       K(z, p)       \right|  
 $
is  bounded from below  by a finite constant $\mathcal C_2>0$ for any $z  \in \Omega$, then $T  $ has a biholomorphic inverse map that extends continuously up to the closure of  the disc;

\item [$2.$]   

if     $\Omega$ satisfies 
  local $C^1$-connectivity around any boundary point $q  
 \notin  \text{int} (\overline \Omega)$,  then $T  $ extends   continuously up to   $\overline\Omega $;

\item [$3.$]   

if both Part $1$ and Part  $2$ hold,  then $T  $ extends   to a homeomorphism of the closures.

 \end{enumerate}

\end{theorem}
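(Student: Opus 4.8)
The plan is to exhibit $T$ as a Bergman isometry onto the model disc and then to deduce the three refinements from the transformation law of the kernel. Write $D_\Omega(z,w)=\log\frac{K(z,z)K(w,w)}{|K(z,w)|^2}$ for Calabi's diastasis of the Bergman metric; on $\mathbb D$ one computes $D_{\mathbb D}(w,0)=-2\log(1-|w|^2)$, and the normalization ``Gaussian curvature $\equiv-2$'' is precisely the one for which the disc is the constant-curvature model. Following Lu's idea together with Calabi's rigidity, I would first integrate the constant-curvature condition to obtain the closed form $D_\Omega(z,p)=-2\log(1-|T(z)|^2)$, equivalently that in the representative coordinate the kernel takes the model shape $K(z,z)=\frac{|T'(z)|^2}{\pi(1-|T(z)|^2)^2}$, so that $T$ is a local holomorphic isometry into $\mathbb D$ with $|T(z)|<1$. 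As in \cite{DWo}, negative constant holomorphic curvature already forces $\Omega$ to be Lu Qi-Keng, so $K(\cdot,p)$ is zero-free and $T$ is single-valued and holomorphic on all of $\Omega$.

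Next I would upgrade $T$ to a global biholomorphism onto its image. Polarizing the diagonal identity gives $K(z,w)=\frac{T'(z)\overline{T'(w)}}{\pi(1-T(z)\overline{T(w)})^2}$; hence if $T(z_1)=T(z_2)$ then $|K(z_1,z_2)|^2=K(z_1,z_1)K(z_2,z_2)$, i.e. equality in the Cauchy--Schwarz bound for the reproducing kernel, which forces $z_1=z_2$. Thus $T\colon\Omega\to T(\Omega)=:\mathbb D\setminus E$ is biholomorphic with $E$ relatively closed in $\mathbb D$. The crux is that $E$ is polar, and this is exactly the step where I must dispense with the continuity hypothesis used in \cite{DWo}: here the upper bound $|K(z,p)|\le\mathcal C_1$ makes the inverse $S:=T^{-1}$ a bounded holomorphic map on $\mathbb D\setminus E$, and I would then run a one-dimensional capacity/removable-singularity argument---using that polar sets are exactly the negligible sets for the $L^2$ holomorphic theory defining $K$, so that a complement of positive logarithmic capacity would enlarge the Bergman space of $\mathbb D\setminus E$ beyond that of $\mathbb D$ and break the isometry---to conclude that $E$ is polar. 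This establishes the main assertion.

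The three parts then follow from the identity $|K(z,p)|=\frac{|T'(p)|}{\pi}\,|T'(z)|$ read off from the model kernel at $w=p$ (recall $T(p)=0$): the upper bound is equivalent to $\sup_\Omega|T'|<\infty$ and the lower bound of Part~$1$ to $\inf_\Omega|T'|>0$. For Part~$2$, $|T'|\le C$ makes $T$ Lipschitz for the internal path distance of $\Omega$, since $|T(x)-T(y)|\le C\,\mathrm{length}(\gamma_{x,y})$ for any $\gamma_{x,y}\subset\Omega$ joining $x$ and $y$; local $C^1$-connectivity around $q\notin\mathrm{int}(\overline\Omega)$ supplies, for each $\epsilon$, joining paths of length $<\epsilon$, so $T$ satisfies a Cauchy condition as $x,y\to q$ and extends continuously there, while the remaining boundary points (those in $\mathrm{int}(\overline\Omega)$, lying over $E$) are removable for the bounded map $T$; hence $T$ extends continuously to $\overline\Omega$. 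For Part~$1$, $c\le|T'|\le C$ gives $|S'|=1/|T'\circ S|$ bounded, so $S$ is Lipschitz on $\mathbb D\setminus E$ and extends to a Lipschitz, hence continuous, map on $\overline{\mathbb D}$ (absorbing $E$ en route).

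Finally, Part~$3$ combines the two extensions: $T\colon\overline\Omega\to\overline{\mathbb D}$ and $S\colon\overline{\mathbb D}\to\overline\Omega$ are continuous and, being mutually inverse on the dense open sets $\Omega$ and $\mathbb D\setminus E$, remain mutually inverse on the closures by continuity; a continuous bijection from the compact set $\overline\Omega$ onto the Hausdorff space $\overline{\mathbb D}$ is a homeomorphism. The step I expect to be the genuine obstacle is the polarity of $E$ in the second paragraph: it is the one-dimensional incarnation of the Suita-type equality phenomenon, and verifying it from only the upper bound on $|K(\cdot,p)|$---without the auxiliary continuity assumption of \cite{DWo}---is the technical heart. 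Once the biholomorphism and the model kernel are in hand, the three extension statements reduce to the elementary Lipschitz estimates above.
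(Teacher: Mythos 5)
Your proposal is organized around the right reduction, and one piece of it is genuinely correct and even elegant: polarizing the diastasis identity and using equality in the Cauchy--Schwarz inequality for reproducing kernels does prove injectivity of $T$ (the paper gets this for free from its rigidity lemma instead). But the step you yourself call the ``technical heart'' --- that $E=\mathbb D\setminus T(\Omega)$ is polar --- is a genuine gap, and the heuristic you offer for it would not close. First, the identity you start from is not available a priori: constant curvature and \cite{DWo} give only the diastasis formula \eqref{Calabi}, whose polarization yields
\begin{equation*}
K(z,w)=\frac{K(z,p)\,\overline{K(w,p)}}{K(p,p)}\left(1-\tfrac{g(p)}{2}\,T(z)\overline{T(w)}\right)^{-2},
\end{equation*}
i.e.\ your ``model shape'' but with the undetermined zero-free factor $K(\cdot,p)$ in place of $\mathrm{const}\cdot T'$; the identity $K(z,p)=\frac{g(p)}{2\pi}T'(z)$ (the paper's \eqref{relation}) is a \emph{consequence} of the theorem, obtained from the transformation formula after polarity is known, not an input. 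Second, because of that free factor, ``a non-polar complement would enlarge the Bergman space and break the isometry'' is not an argument: transporting the kernel to $T(\Omega)$ gives $K_{T(\Omega)}(\zeta,\eta)=c\,H(\zeta)\overline{H(\eta)}(1-\zeta\bar\eta)^{-2}$ with $H$ zero-free holomorphic, and this form is insensitive to how large the Bergman space of $T(\Omega)$ is --- any enlargement is absorbed into $H$, and the Bergman metric is unchanged since $\log|H|^2$ is harmonic. No contradiction follows. Third, and symptomatically, your sketch never actually uses the hypothesis $|K(\cdot,p)|\le\mathcal C_1$ at this step: your claim that it makes $S=T^{-1}$ bounded is empty ($S$ is bounded because $\Omega$ is bounded). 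If polarity of $E$ followed without that hypothesis, the theorem would hold without it, which is exactly what neither \cite{DWo} nor this paper achieves.

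What the paper does instead is extract \emph{boundary} information, and that is where the kernel bound enters. By \cite{PZ}, $K(z,z)\to\infty$ at every Dirichlet regular boundary point; together with $|K(z,p)|\le\mathcal C_1$ this forces $\Phi_p(z)\to\infty$ there, hence $|T(z)|^2\to 2g^{-1}(p)$ at every regular point, and by Kellogg's theorem the exceptional (irregular) points form a polar set. Combined with the logarithmic singularity at $p$ and harmonicity of $\log|T|$ away from $p$ (non-vanishing of $T$ on $\Omega\setminus\{p\}$, again by Cauchy--Schwarz), this exhibits $\log|T|+\log\sqrt{g(p)}-\log\sqrt2$ as the Green's function of $\Omega$ with pole $p$, and the rigidity lemma for Green's functions (Lemma~\ref{Green lem}, from \cite{DTZ}) then delivers precisely the claim you are missing: $T$ is a biholomorphism onto a disc less a relatively closed polar set. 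Your Lipschitz arguments for Parts 1--3 essentially coincide with the paper's and are fine once the main assertion is in hand (though your assertion in Part 2 that boundary points in $\mathrm{int}(\overline\Omega)$ are ``removable for $T$'' also needs the local complement to be polar, which is not automatic); as it stands, however, the proposal proves the theorem only modulo its central claim.
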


          \medskip
    
 Theorem \ref{1-dim}    improves Theorem 1.3 in  \cite{DWo} for the case of planar domains. 
The conditions in   Theorem \ref{1-dim}  relate to an important  result \cite{Ker} of Kerzman, who used   Kohn's  theory of 
the $\bar \partial$-Neumann problem 
   to show that on a bounded strictly pseudoconvex domain $\Omega$ with
$C^\infty$-smooth boundary, for each fixed $p \in \Omega$, the Bergman kernel $K(\cdot, p)$ is $C^\infty$ up to the boundary.  In \cite[p.151-152]{Ker}, he gave an example of a simply-connected planar domain   whose Bergman kernel $K(\cdot, p)$ blows up to infinity at some boundary point; see also \cite{For} for an example of Forn\ae ss. 
  The  corollary below gives  sufficient conditions for the extension of the Riemann map  to the closures.

          \medskip

 \begin{cor} \label{simply} 
     Let $D$ be a bounded, simply-connected  domain in  $\mathbb C$.  Then, for any $p \in D$,  the Bergman representative coordinate $T (z)  $  relative to $p$   is   biholomorphic from $D$   to a disc $\mathbb D_r:= \{ w\in \mathbb C :     |w|^2    < 2g^{-1}(p) \}$, where $g$ is the Bergman metric of $D$. Moreover,   
     \begin{enumerate}
\item [$1.$]   
if     $\Omega$ satisfies 
  local $C^1$-connectivity around any $q  \in \partial D$ and
 there exists some point   $p \in D$ such that   
 $
  \left|       K(z, p)       \right|  
 $
is  bounded from above by a finite constant $\mathcal C_1>0$ for any $z  \in D$, 
then   $T  $ extends   continuously up to   $\overline D $;

\item [$2.$]   

  if
 $
  \left|       K(z, p)       \right|  
 $
is  bounded from below  by a finite constant $\mathcal C_2>0$ for any $z  \in D$, then $T  $ has a biholomorphic inverse map that extends continuously up to $\overline {\mathbb D_r}$;   in particular,  
 $\partial D$ is locally connected;

\item [$3.$]   

if both Part $1$ and Part  $2$ hold,   then $T  $ extends to a homeomorphism of the closures,  $\tilde {T}  : \overline D \to \overline{\mathbb D_r}$;  in particular,  
     $\partial D$ is a Jordan curve.

 \end{enumerate}

  \end{cor}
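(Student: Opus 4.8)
The plan is to derive everything from Theorem \ref{1-dim} together with the Riemann mapping theorem, using that a bounded simply-connected planar domain is biholomorphic to $\mathbb D$. First I would check that the hypotheses of Theorem \ref{1-dim} hold automatically for such a $D$. Fix a Riemann map $R\colon D\to\mathbb D$ with $R(p)=a$. The Bergman kernel transforms by $K_D(z,p)=\overline{R'(p)}\,R'(z)\,K_{\mathbb D}(R(z),a)$, a product of nowhere-vanishing factors, so $K_D(\cdot,p)$ is zero-free and the representative coordinate $T$ in \eqref{rep} is globally defined and holomorphic on all of $D$; in particular the polar set that Theorem \ref{1-dim} allows is empty. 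Since the Gaussian curvature of the Bergman metric is a biholomorphic invariant and the Bergman metric of $\mathbb D$ has curvature identically $-2$, the metric of $D$ also has curvature $\equiv -2$, so Theorem \ref{1-dim} applies at this $p$.

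Next I would establish the unnumbered assertion that $T$ is a biholomorphism of $D$ onto the round disc $\mathbb D_r$. Here I would invoke Bergman's linearization principle: under the biholomorphism $R$ the representative coordinates satisfy $T_{D,p}=L\circ T_{\mathbb D,a}\circ R$ for a nonzero scalar linear map $L$. A direct computation from \eqref{rep} on the disc shows that $T_{\mathbb D,0}$ is the identity and, via an automorphism of $\mathbb D$ carrying $0$ to $a$, that $T_{\mathbb D,a}$ maps $\mathbb D$ onto a disc centered at the origin; transporting by $R$ shows $T(D)$ is a disc centered at $0$. To pin down the radius I would use the normalization $T'(p)=1$ forced by \eqref{rep}, under which the Bergman metric coefficient at the center of $T(D)$ equals $g(p)$; since a disc of radius $\rho$ has Bergman metric $2/\rho^2$ at its center, matching gives $\rho^2=2g^{-1}(p)$, that is $T(D)=\mathbb D_r$. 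This step, confirming that the image is a genuine round disc of exactly this radius rather than merely some simply-connected domain of constant curvature, is the main obstacle, as it is precisely where the rigidity of the representative coordinate under constant curvature is used.

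With the base claim in hand, the three numbered parts are read off from the corresponding parts of Theorem \ref{1-dim} followed by elementary topology. For Part $1$, local $C^1$-connectivity around \emph{every} $q\in\partial D$ is a fortiori local $C^1$-connectivity around every $q\notin\mathrm{int}(\overline D)$, and the upper bound $|K(z,p)|\le\mathcal C_1$ is supplied, so Part $2$ of Theorem \ref{1-dim} gives the continuous extension of $T$ to $\overline D$. For Part $2$, the lower bound $|K(z,p)|\ge\mathcal C_2$ lets Part $1$ of Theorem \ref{1-dim} extend $T^{-1}$ continuously to $\overline{\mathbb D_r}$; since $T(D)=\mathbb D_r$ is open, this extension carries $\partial\mathbb D_r$ onto $\partial D$, exhibiting $\partial D$ as the continuous image of a circle and hence a locally connected continuum. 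For Part $3$, Part $3$ of Theorem \ref{1-dim} promotes $T$ to a homeomorphism $\tilde T\colon\overline D\to\overline{\mathbb D_r}$ of the closures, whence $\partial D=\tilde T^{-1}(\partial\mathbb D_r)$ is a homeomorphic image of a circle, that is, a Jordan curve, recovering the conclusion of Carath\'eodory's theorem \cite{Cara} under the present Bergman-kernel hypotheses.
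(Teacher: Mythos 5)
Your treatment of the base claim (Bergman's linearization principle plus the explicit computation of $T_{\mathbb D,a}$ and the radius normalization) is correct; it amounts to reproving the classical fact that the paper simply cites to Bergman and Lu. Your Parts 1 and 3 are also legitimate, because in those parts of Corollary \ref{simply} the upper bound $|K(z,p)|\le\mathcal C_1$ is among the stated assumptions, so Theorem \ref{1-dim} genuinely applies. Note, though, that the paper itself never routes through Theorem \ref{1-dim}: it derives all three parts from the single identity \eqref{relation!}, $K(z,p)=\frac{g(p)}{2\pi}T'(z)$, which follows from the transformation formula once $T\colon D\to\mathbb D_r$ is known to be biholomorphic, since $K_{\mathbb D_r}(\cdot,0)$ is the constant $1/(\pi r^2)=g(p)/(2\pi)$.

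The genuine gap is in your Part 2, and it stems from the false assertion in your first paragraph that \emph{the hypotheses of Theorem \ref{1-dim} hold automatically} for a bounded simply connected $D$. Theorem \ref{1-dim} carries the standing hypothesis that $|K(\cdot,p)|$ is bounded \emph{above} on the domain, and this is not automatic: Kerzman's domain \eqref{Kerzman's example} is a bounded simply connected domain on which $|K(\cdot,p)|$ is unbounded, as the paper's remarks following this corollary emphasize. You verified only zero-freeness of the kernel and the curvature, not this bound. Consequently, in Part 2 of the corollary, where only the \emph{lower} bound $|K(z,p)|\ge\mathcal C_2$ is assumed, you are not entitled to invoke Part 1 of Theorem \ref{1-dim} at all, since that part sits under the theorem's standing upper-bound hypothesis (its proof uses \eqref{relation}, which is derived from that hypothesis). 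A side issue: your inference that zero-freeness of $K(\cdot,p)$ makes the polar set in Theorem \ref{1-dim} empty is a non sequitur; that polar set is a possible defect of the \emph{image} $T(\Omega)$, not of the zero set of $K$, and it is harmless in your write-up only because your second paragraph independently shows $T(D)=\mathbb D_r$. The repair for Part 2 is short and is exactly what the paper does: having established that $T$ is a biholomorphism of $D$ onto $\mathbb D_r$, the transformation formula gives \eqref{relation!}; then $|T'(z)|=\frac{2\pi}{g(p)}|K(z,p)|\ge\frac{2\pi\mathcal C_2}{g(p)}$, so by the inverse function theorem $|(T^{-1})'|\le\frac{g(p)}{2\pi\mathcal C_2}$ on the convex set $\mathbb D_r$, whence $T^{-1}$ is Lipschitz and extends continuously to $\overline{\mathbb D_r}$; your topological argument identifying $T^{-1}(\partial\mathbb D_r)=\partial D$ then finishes Part 2 as written.
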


 \paragraph{Remark.} The condition   in Part 2 of  Corollary \ref{simply} is only sufficient but not necessary for the local  connectedness of the boundary, in view of  Webster's example (a) in \cite {Web}. Corollary \ref{simply}  also yields a criterion (see Proposition \ref {simply counter}) for   a  simply-connected planar domain $D$  to satisfy
$
\inf_{z \in D} |K(z, p)|  = 0,
 $
which may be compared with Webster's examples (a) and (b)   in \cite{Web}.

 \newpage

 Historically,  Bergman's  coordinates  provide a right tool to analyze the extension of biholomorphic maps to the boundary. Fefferman \cite{Fef}  proved that biholomorphic maps between two bounded domains  with smooth, strictly pseudoconvex boundaries in  $ \mathbb C^n$ admit  smooth extensions to the closures of the domains. Previously, it was known that such  maps   extend  to   homeomorphisms of the closures (see the papers of     Henkin \cite{Hen} and   Vormoor \cite{Vo}).  Webster \cite{Web}, and Nirenberg, Webster and Yang \cite{NWY} 
  gave   new proofs of Fefferman's mapping theorem, whose original proof used the asymptotic expansion of the Bergman kernel and an analysis of the boundary behavior of the geodesics of the Bergman metric.    
  Later, the Bergman-style coordinates were applied  by Bell and Ligocka   \cite{BLi, Lig} to prove that subelliptic estimates for the $\bar \partial$-Neumann problem imply boundary regularity of biholomorphic maps. 
 See also  \cite{Bell81, DF, For} for the extensions of biholomorphic maps involving more general domains. For more applications of  the Bergman representative coordinates, see the papers \cite{GK, Bell06, BCOV, Dinew, Berceanu, Yoo, Kra}  and the references therein.
 
        \medskip
            \medskip
            
   For higher dimensional domains,   we improve our results in \cite{DWo} by considering the following technical condition, which is similar to  the so-called  Condition $R$, cf. \cite{BLi, BB}. 

        \medskip
        
\begin{definition}   A domain $\Omega$ in $ \mathbb C^n, n\ge 1$, is said to satisfy Condition $(B)$ if  there exist  an open set   $U \subset \Omega$ 
and a finite constant $\mathcal C>0$ 
such that   for each $ j \in \{1, \dotsc, n\}$,  
 $$
  \left|  {   \frac{\partial }{\partial z_j   } K(z, p)   } \right|  \leq \mathcal C |K(z,  p)     |, \quad  \text{for any\, }    z  \in \Omega  \text{ and any\, }  p \in U.
   $$

  \end{definition}

          \medskip

  Condition $(B)$ is {\it not} a biholomorphic invariant, as can be seen from our Remarks in Section 3.
Our main  theorem
 gives a one-parameter family of biholomorphisms that extend homeomorphically to the closures.

        \medskip

     \begin{theorem} \label{with B}  Let $\Omega \subset \mathbb C^n, n \ge 1$, be a bounded pseudoconvex  domain whose Bergman metric $g$ has its holomorphic sectional curvature identically equal to a negative constant $-c^2$.  If  $\Omega$ satisfies 
   \begin{enumerate}
\item [$1)$]    Condition $(B)$ for some  open set   $U$, and

\item [$2)$]     local $C^1$-connectivity around any $q \in \partial \Omega$ such that
$
\limsup_{\Omega \ni z \to q} K(z, z) = \infty,
$
   \end{enumerate}
then for any  $p \in U$,  the Bergman representative coordinate $T (z)  $  relative to $p$ is biholomorphic from $\Omega$  to a ball 
\begin{equation} \label{ball}
\mathcal B:= \{ w\in \mathbb C^n :   \sum_{\alpha, \beta=1}^n  w_\alpha   g_{ \alpha \bar \beta } (p) \overline {w_\beta }   < {2}{c^{-2}} \}
\end{equation}  
 possibly less a relatively closed pluripolar set, where $n = 2 c^{-2}-1$,   and $T    $ extends to a homeomorphism of the closures.

  \end{theorem}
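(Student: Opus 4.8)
The plan is to separate the interior statement --- that $T$ is a biholomorphism of $\Omega$ onto $\mathcal B$ less a relatively closed pluripolar set --- from the boundary statement, and to reserve Condition $(B)$ together with local $C^1$-connectivity for the latter. For the interior statement I would reproduce the argument of \cite{DWo}. Since $\Omega$ is bounded pseudoconvex and its Bergman metric $g$ has constant holomorphic sectional curvature $-c^2$, the rigidity of K\"ahler metrics of constant holomorphic sectional curvature forces Calabi's diastasis $D_\Omega(z,w)=\log\big(K(z,z)K(w,w)/\abs{K(z,w)}^2\big)$ to coincide with that of the model ball; in particular $K(z,w)$ is zero-free, so $\Omega$ is Lu Qi-Keng, the coordinate \eqref{rep} is globally defined and holomorphic on $\Omega$, and it is identified with Lu's linearizing coordinate, giving the isometry $T^{*}g^{\mathcal B}=g$ and realizing $T$ as a biholomorphism of $\Omega$ onto an open subset of $\mathcal B$. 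The exponent $n=2c^{-2}-1$ is the curvature matching $-c^2=-2/(n+1)$ for the Bergman metric of $\mathcal B$, and the omitted set $P:=\mathcal B\setminus T(\Omega)$ is relatively closed and pluripolar because the components of $T^{-1}$ are bounded holomorphic functions while $P$ carries no mass for the Bergman space, so that $K_{\mathcal B\setminus P}=K_{\mathcal B}$ and the isometry is unaffected by $P$.

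The crux of the boundary statement is that Condition $(B)$, imposed on an \emph{open} set $U$ rather than at a single point, forces the Euclidean Jacobian of $T$ to be globally bounded. Differentiating \eqref{rep} produces, up to the invertible constant matrix $[g^{\bar j\alpha}(p)]$, the two shapes $K(z,p)^{-1}\partial_{z_k}\partial_{\bar t_j}K(z,t)|_{t=p}$ and $\big(K(z,p)^{-1}\partial_{z_k}K(z,p)\big)\big(K(z,p)^{-1}\partial_{\bar t_j}K(z,t)|_{t=p}\big)$. Here $K(z,p)^{-1}\partial_{\bar t_j}K(z,t)|_{t=p}$ is bounded because $T$ takes values in the bounded set $\mathcal B$, the factor $K(z,p)^{-1}\partial_{z_k}K(z,p)$ is bounded by Condition $(B)$, and the mixed-derivative term is controlled by a Cauchy estimate in $t$ over polydiscs inside $U$, using the zero-freeness of $K$ to compare $\abs{K(z,t)}$ on small circles about $p$ with its value at $p$. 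Thus $\norm{T'(z)}\le C$ and $\abs{\det T'(z)}\le C^n$ on all of $\Omega$. The Jacobian bound now does double duty. First, since $K_\Omega(z,z)=K_{\mathcal B}(T(z),T(z))\abs{\det T'(z)}^2$ and $\abs{\det T'}$ is bounded, any $q\in\partial\Omega$ with $\limsup_{z\to q}K(z,z)=\infty$ must have $T(z)\to\partial\mathcal B$, as $K_{\mathcal B}$ blows up only there. Second, the Euclidean length of $T(\gamma)$ is at most $C$ times that of $\gamma$, so local $C^1$-connectivity at such $q$ --- which supplies, for $x,y$ near $q$, a path $\gamma_{x,y}\subset\Omega$ of length $<\epsilon$ with $\abs{T(x)-T(y)}<C\epsilon$ --- collapses the cluster set of $T$ at $q$ to a single point of $\partial\mathcal B$, giving the continuous extension there.

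It remains to treat the boundary points with $\limsup_{z\to q}K(z,z)<\infty$ and to assemble the homeomorphism. These points are matched, through the isometry, with the interior points of $\mathcal B$ forming $P$: since $T^{-1}$ is bounded, it extends holomorphically across the pluripolar $P$ by removability, its (single-valued) extension sends $P$ into $\partial\Omega$, and so these boundary points carry no connectivity hypothesis and already behave well. Collecting the two cases yields a continuous extension $\tilde T:\overline\Omega\to\overline{\mathcal B}$ of $T$; it is surjective because $T(\Omega)=\mathcal B\setminus P$ is dense in $\overline{\mathcal B}$ and $\tilde T(\overline\Omega)$ is compact. Injectivity off $\partial\mathcal B$ is immediate from the injectivity of $T$ and the single-valued extension of $T^{-1}$ across $P$; injectivity over $\partial\mathcal B$ follows by running the extension argument symmetrically for $T^{-1}$, the ball being convex so that local $C^1$-connectivity is automatic. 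A continuous bijection from the compact $\overline\Omega$ onto the Hausdorff $\overline{\mathcal B}$ is then the desired homeomorphism.

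The step I expect to be the main obstacle is the global Jacobian bound $\norm{T'}\le C$: propagating Condition $(B)$ from the first-order bound on $\partial_{z_k}K(z,p)$ to a bound on the mixed derivative $\partial_{z_k}\partial_{\bar t_j}K$ requires the Cauchy-and-zero-freeness comparison to be made uniform in $z$ up to the boundary, and this is precisely where the non-invariance of Condition $(B)$ is felt. The secondary delicate point is injectivity of $\tilde T$ over $\partial\mathcal B$, equivalently the continuous extension of $T^{-1}$ to $\partial\mathcal B$; here one must ensure that the symmetric length estimate survives the blow-up of the Bergman metric near the genuine boundary, so that distinct boundary points cannot share a boundary image.
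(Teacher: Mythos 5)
Your proposal inverts the logical structure of the argument, and this creates a genuine gap at the very first step. You treat the interior statement --- that $T$ is a biholomorphism of $\Omega$ onto $\mathcal B$ less a relatively closed pluripolar set --- as a consequence of \cite{DWo} alone, reserving Condition $(B)$ and local $C^1$-connectivity for the boundary extension. But without Bergman completeness, what \cite{DWo} gives is only that $\Omega$ is Lu Qi-Keng, that $T$ maps $\Omega$ \emph{into} the ball $\mathcal B$, and the diastasis identity $\Phi_p(z) = -\frac{2}{c^2}\log\left(1-\frac{c^2}{2}|T(z)|^2\right)$; neither injectivity of $T$ nor the statement that $\mathcal B \setminus T(\Omega)$ is pluripolar is available at that stage --- these are precisely what the two hypotheses must be used to prove. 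Your justification that $P = \mathcal B\setminus T(\Omega)$ is pluripolar ``because the components of $T^{-1}$ are bounded \dots while $P$ carries no mass for the Bergman space'' is circular: it presupposes the inverse map and the conclusion. The paper's actual route is the reverse: Condition $(B)$ makes the pluriharmonic function $h_p = 2\log|K(\cdot,p)|$ have bounded gradient; local $C^1$-connectivity then makes $h_p$ (and, via Proposition \ref{upper bound}, $T$ itself) continuous up to $\overline\Omega$, hence $|K(\cdot,p)|$ is bounded above and below on $\Omega$; consequently $\Phi_p(z)\to\infty$ at every boundary point where $K(z,z)\to\infty$, while at boundary points where $\limsup K(z,z)<\infty$ the Pflug--Zwonek theorem \cite{PZ} (which you never invoke) shows the point lies in $\mathrm{int}(\overline\Omega)$ with a pluripolar piece of $\partial\Omega$ nearby, across which kernel and metric extend. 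Only after filling in these pluripolar pieces to form $\tilde\Omega$ does the diastasis characterization (Theorem \ref{biholo}) produce the biholomorphism onto the ball and the relation $n = 2c^{-2}-1$. Your subsequent uses of the transformation formula $K_\Omega(z,z)=K_{\mathcal B}(T(z),T(z))\,|\det T'(z)|^2$ therefore rest on an unproved premise. (Your Jacobian bound for $T$ via Cauchy estimates on a polydisc in $U$ is fine --- it is essentially Proposition \ref{upper bound} --- but by itself it does not yield the interior statement.)

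The second gap is the continuous extension of $T^{-1}$ up to $\overline{\mathcal B}$, which you correctly flag as delicate but do not resolve. Running your argument ``symmetrically'' is not possible: Condition $(B)$ is not a biholomorphic invariant (see the Remarks in Section 3), so nothing analogous to it is available on the ball side, and convexity of $\mathcal B$ only supplies connectivity, not the needed derivative bound. What closes this gap in the paper is quantitative and comes out of Step 1: since $h_p$ is continuous on the compact set $\overline\Omega$, one has the two-sided bound $\mathcal C_p^{-1}\le |K(z,p)|\le \mathcal C_p$; the transformation formula then gives $|\det JT| = |K(\cdot,p)|\, v(\mathcal B)$ bounded below, while the entries of $\mathrm{Adj}(JT)$ are bounded above by Proposition \ref{upper bound}, so $JS = \mathrm{Adj}(JT)/\det JT$ is bounded, $S = T^{-1}$ is Lipschitz on $\mathcal B\setminus E$, extends across the pluripolar set $E$, and is continuous up to $\overline{\mathcal B}$. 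Without the lower bound on $|K(\cdot,p)|$ --- which you never derive --- your homeomorphism claim, and in particular the injectivity of $\tilde T$ over $\partial\mathcal B$, remains unsupported.
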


        \medskip
 
   \medskip

We say a set $E$ is pluripolar if there exists a plurisubharmonic function $\varphi$ in $\mathbb C^n$ such that $\varphi = -\infty$ on $E$. In view of a result \cite{S82} of Siciak, a pluripolar set is  
 negligible for $L^2$ holomorphic functions.  
It is known that a bounded $L^2$-domain of holomorphy is  pseudoconvex and its boundary  contains no pluripolar part, cf. \cite{PZ, I04}; see Example \ref{exam} for a pseudoconvex domain which is not an $L^2$-domain of holomorphy. 
 Theorem \ref {with B} directly yields the following corollary.

\begin{cor} \label{Cor}  Let $\Omega \subset \mathbb C^n, n \ge 1$, be a bounded  $L^2$-domain of holomorphy  whose Bergman metric $g$ has its holomorphic sectional curvature identically equal to a negative constant $-c^2$.  If  $\Omega$ satisfies 
 \begin{enumerate}
\item [$1)$]    Condition $(B)$ for some  open set   $U$, and
\item [$2)$]    local $C^1$-connectivity around any $q \in \partial \Omega$,
 \end{enumerate}
then for any  $p \in U$,  the Bergman representative coordinate $T (z)  $  relative to $p$ is biholomorphic from $\Omega$  to a ball 
$\mathcal B $ defined in  \eqref{ball},
where $n = 2 c^{-2}-1$,   and $T    $ extends to a homeomorphism of the closures,  $\tilde {T}  : \overline\Omega \to \overline{\mathcal B}$; in particular, the  boundary  $\partial \Omega$ is topologically the sphere $S^{2n-1}$.
\end{cor}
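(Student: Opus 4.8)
The plan is to read off the corollary from Theorem \ref{with B} after checking its hypotheses and then removing the pluripolar set on the image side. First I would verify that all assumptions of Theorem \ref{with B} are in force. Since a bounded $L^2$-domain of holomorphy is pseudoconvex (cf. \cite{PZ, I04}), the pseudoconvexity hypothesis holds, while the curvature assumption and Condition $(B)$ are given directly. Moreover, local $C^1$-connectivity around \emph{every} boundary point $q \in \partial\Omega$ holds in particular around those $q$ for which $\limsup_{\Omega \ni z \to q} K(z,z) = \infty$, which is precisely hypothesis $2)$ of Theorem \ref{with B}. Hence Theorem \ref{with B} applies and furnishes, for each $p \in U$, a biholomorphism $T \colon \Omega \to \mathcal B \setminus P$ onto the ball $\mathcal B$ of \eqref{ball} less a relatively closed pluripolar set $P \subset \mathcal B$, together with a homeomorphic extension $\tilde T \colon \overline\Omega \to \overline{\mathcal B}$ of the closures (here $\overline{\mathcal B \setminus P} = \overline{\mathcal B}$, since the pluripolar set $P$ has empty interior).

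The heart of the deduction is to show that the $L^2$-domain-of-holomorphy hypothesis forces $P = \emptyset$. Because $\Omega$ is bounded, the inverse $T^{-1} \colon \mathcal B \setminus P \to \Omega$ is a bounded holomorphic map; since closed pluripolar sets are removable singularities for bounded holomorphic functions, each component of $T^{-1}$ extends holomorphically across $P$, giving a holomorphic map $\widehat{T^{-1}} \colon \mathcal B \to \mathbb C^n$. This extension agrees on the dense set $\mathcal B \setminus P$ with the continuous map $\tilde T^{-1}$, hence coincides with it on all of $\mathcal B$ and is therefore injective; consequently $\widehat{T^{-1}}$ is a biholomorphism of $\mathcal B$ onto an open set $\Omega' := \widehat{T^{-1}}(\mathcal B) \supset \Omega$. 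Writing $E := \widehat{T^{-1}}(P)$, which is relatively closed in $\Omega'$ and pluripolar (pluripolarity being a biholomorphic invariant), I obtain $\Omega = \Omega' \setminus E$ with $E \subset \partial\Omega$. Thus, were $P \neq \emptyset$, then $E$ would be a nonempty relatively closed pluripolar part of $\partial\Omega$, contradicting the fact \cite{PZ, I04} that the boundary of a bounded $L^2$-domain of holomorphy contains no pluripolar part. Therefore $P = \emptyset$ and $T \colon \Omega \to \mathcal B$ maps onto the full ball.

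It then remains to record the topological conclusion, which is immediate. As $T = \tilde T|_\Omega$ is a biholomorphism of $\Omega$ onto $\mathcal B$, the homeomorphism $\tilde T$ sends $\Omega$ onto $\mathcal B$ and hence carries $\partial\Omega = \overline\Omega \setminus \Omega$ onto $\partial\mathcal B = \overline{\mathcal B}\setminus \mathcal B$. Its restriction $\tilde T|_{\partial\Omega}$ is a continuous bijection between compact Hausdorff spaces, and so is a homeomorphism; since $\partial\mathcal B$ is the Euclidean sphere $S^{2n-1}$, I conclude $\partial\Omega \cong S^{2n-1}$.

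The step I expect to be the only genuine obstacle is ruling out $P \neq \emptyset$. The delicate point is that the biholomorphic invariance of the $L^2$-domain-of-holomorphy property is not purely formal: one cannot merely transport $L^2$ functions through $T$, but must actually extend the \emph{map} $T^{-1}$ across the pluripolar set $P$ in order to realize $\Omega$ as a strictly larger domain minus a pluripolar set. This is exactly what lets the removability of pluripolar sets (in the spirit of \cite{S82}) and the boundary result of \cite{PZ, I04} interlock; once $P = \emptyset$ is secured, everything else is formal.
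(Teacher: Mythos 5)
Your proposal is correct, and it hinges on the same external fact as the paper --- that the boundary of a bounded $L^2$-domain of holomorphy contains no pluripolar part, cf. \cite{PZ, I04} --- but it reaches the conclusion by a different route. The paper's proof of Corollary \ref{Cor} is a one-line deduction that implicitly relies on the \emph{internal structure} of the proof of Theorem \ref{with B}: there the exceptional set is constructed as $E=\tilde\Omega\cap\partial\Omega$, a relatively closed pluripolar part of $\partial\Omega$ assembled from Pflug--Zwonek neighborhoods of boundary points where $K(z,z)$ stays bounded, and the $L^2$-hypothesis rules out all such points at once, so $E=\emptyset$ and the image is the full ball. You instead treat Theorem \ref{with B} as a black box and reconstruct this structure from its statement alone: you extend the bounded holomorphic inverse $T^{-1}$ across $P$ (since $\Omega$ is bounded, each component of $T^{-1}$ is $L^2$, so the paper's cited result of Siciak \cite{S82} already suffices for this removability), identify the extension with $\tilde T^{-1}$ by density and continuity to get injectivity, and realize $\Omega=\Omega'\setminus E$ with $E=\widehat{T^{-1}}(P)$ relatively closed and pluripolar, contradicting \cite{PZ, I04} unless $P=\emptyset$. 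What your route buys is logical independence from the proof of Theorem \ref{with B}, at the cost of an extra removable-singularity argument; the paper's route is shorter only because it quotes its own proof. Two small points to tighten: the inclusion $E\subset\partial\Omega$ deserves a word --- $E$ is relatively closed in $\Omega'$ with empty interior (pluripolar sets are Lebesgue-null), so $\Omega=\Omega'\setminus E$ is dense in $\Omega'$ and hence $E\subset\overline\Omega\setminus\Omega$ --- and the biholomorphic invariance of pluripolarity that you use for $E$ is exactly the fact the paper invokes in Step 1 of the proof of Theorem \ref{with B}, so it can be cited verbatim.
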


          \medskip
          
 For any bounded domain $\Omega$ that is biholomorphic to a ball in $ \mathbb C^n$, its Bergman metric is {\it complete} and has constant holomorphic sectional curvature.  
When such $\Omega$ satisfies both Condition $(B)$ and  local $C^1$-connectivity around any $q \in \partial \Omega$, our Corollary \ref{Cor} guarantees that   its  boundary  $\partial \Omega$ is topologically the sphere $S^{2n-1}$; the converse is not true, in view of Kerzman's example in \cite {Ker}.  Moreover, we prove that

         \medskip

\begin{theorem}  \label{under biholo}

Let $\Omega$  be a bounded domain that is  biholomorphic to   a ball in $ \mathbb C^n, n \ge 1$. 
 If   $\Omega$ satisfies    Condition $(B)$ for some  open set     $U$, and
 there  exists   a finite  constant  $    C>0$
 such that  
$$
  \left|       K(z, p)       \right|  \geq  C,   \quad     \, \forall z\in \Omega,  \, \forall p \in U,
$$
 then  for any $p \in U$, 
 the Bergman representative coordinate $T (z)  $ relative to $p$ has a biholomorphic    inverse map that extends continuously up to the closure of the ball. In particular,  $\partial \Omega$ is locally connected.
 
\end{theorem}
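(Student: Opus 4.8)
The plan is to reduce the continuous extension of $T^{-1}$ to a uniform two-sided estimate on the Jacobian matrix $DT=[\partial w_\alpha/\partial z_\beta]$, and to read off local connectedness from a properness argument. Since $\Omega$ is biholomorphic to a ball, its Bergman metric is complete and has constant holomorphic sectional curvature, so by Lu's theorem the representative coordinate $T$ relative to $p$ is a biholomorphism of $\Omega$ onto a ball $\mathcal B$ of the form \eqref{ball}, with $T(p)=0$; that $T$ is globally defined uses that $\Omega$, being biholomorphic to a ball, is Lu Qi-Keng, so $K(\cdot,p)$ is zero-free. The starting observation is a Jacobian identity: applying the biholomorphic transformation law of the Bergman kernel to $T$ and using that for a ball $K_{\mathcal B}(w,0)=K_{\mathcal B}(0,0)$ is a nonzero constant, one obtains $K(z,p)=\kappa\,\det DT(z)$ for all $z\in\Omega$, where $\kappa=K_{\mathcal B}(0,0)\,\overline{\det DT(p)}\neq 0$. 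In particular the lower bound $\abs{K(z,p)}\ge C$ immediately yields a uniform lower bound $\abs{\det DT(z)}\ge C/\abs{\kappa}=:C'>0$.

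The crux is to bound $\norm{DT}$ from above uniformly on $\Omega$, and this is exactly where Condition $(B)$ over the \emph{whole} open set $U$ enters. Writing $\Psi_\beta(z,t):=\partial_{z_\beta}K(z,t)/K(z,t)$, which is holomorphic in $z$ and anti-holomorphic in $t$ on $\Omega\times\Omega$, Condition $(B)$ says precisely that $\abs{\Psi_\beta(z,t)}\le\mathcal C$ for all $z\in\Omega$ and all $t\in U$. Fix $p\in U$ and $r>0$ with $\overline{B(p,r)}\subset U$. For each fixed $z$ the function $t\mapsto\Psi_\beta(z,t)$ is anti-holomorphic and bounded by $\mathcal C$ on $B(p,r)$, so a Cauchy estimate in the $t$-variable gives $\abs{(\partial_{\overline{t_j}}\Psi_\beta)(z,p)}\le\mathcal C/r$, \emph{uniformly in} $z$. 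Since $(\partial_{\overline{t_j}}\Psi_\beta)(z,p)=\partial_{z_\beta}\partial_{\overline{t_j}}\log K(z,t)\big|_{t=p}$ and, from \eqref{rep}, $\partial_{z_\beta}w_\alpha=\sum_j g^{\bar j\alpha}(p)\,\partial_{z_\beta}\partial_{\overline{t_j}}\log K(z,t)\big|_{t=p}$, we conclude $\norm{DT(z)}\le M$ for a constant $M$ independent of $z$. The essential point, and the step I expect to be the main obstacle to phrase cleanly, is that having Condition $(B)$ for an open set of base points rather than a single one is exactly what makes this Cauchy estimate in $t$ available.

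With $\norm{DT}\le M$ and $\abs{\det DT}\ge C'$ in hand, the adjugate formula $(DT)^{-1}=(\det DT)^{-1}\,\mathrm{adj}(DT)$, together with the elementary bound $\norm{\mathrm{adj}(DT)}\le c_n\norm{DT}^{\,n-1}$, gives $\norm{(DT(z))^{-1}}\le c_nM^{n-1}/C'=:M'$ for all $z$ (for $n=1$ only the lower bound on $\det DT$ is needed). Hence $\norm{D(T^{-1})(w)}=\norm{(DT(T^{-1}(w)))^{-1}}\le M'$ for every $w\in\mathcal B$, and since $\mathcal B$ is convex, integrating along segments shows $T^{-1}$ is $M'$-Lipschitz, hence uniformly continuous, and therefore extends continuously to a map $\widetilde{T^{-1}}\colon\overline{\mathcal B}\to\overline\Omega$.

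Finally, for the local connectedness of $\partial\Omega$: since $T$ is a biholomorphism of $\Omega$ onto $\mathcal B$ it is proper, so a short sequential argument shows that $\widetilde{T^{-1}}$ maps $\partial\mathcal B$ \emph{onto} $\partial\Omega$. Thus $\partial\Omega$ is a continuous image of the sphere $\partial\mathcal B=S^{2n-1}$, which is a Peano continuum; by the Hahn--Mazurkiewicz theorem $\partial\Omega$ is again a Peano continuum, and in particular locally connected.
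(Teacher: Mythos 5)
Your proof is correct, and its overall skeleton is the same as the paper's: Lu's theorem gives the biholomorphism $T\colon\Omega\to\mathcal B$, Condition $(B)$ gives a uniform upper bound on the Jacobian of $T$, the transformation formula \eqref{transf} together with $\abs{K(z,p)}\ge C$ gives a lower bound on $\det DT$, the adjugate formula plus convexity of $\mathcal B$ makes $T^{-1}$ Lipschitz and hence continuous up to $\overline{\mathcal B}$, and boundary surjectivity yields local connectedness. Where you genuinely diverge is the technical core, the upper bound on $DT$. The paper invokes its Proposition \ref{upper bound}, whose proof splits $\partial w_\alpha/\partial z_j$ by the quotient rule as in \eqref{good} and must control the resulting cross term by the auxiliary estimate \eqref{less} (boundedness of $K^{-1}(z,p)\,\partial_{\bar t_j}K(z,t)\vert_{t=p}$, which in turn rests on the constant-curvature fact that the components of $T$ are bounded), before applying a Cauchy estimate in the $t$-variable. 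You instead use the exact identity $\partial_{z_\beta}w_\alpha(z)=\sum_j g^{\bar j\alpha}(p)\,\bigl(\partial_{\bar t_j}\Psi_\beta\bigr)(z,p)$ with $\Psi_\beta=\partial_{z_\beta}\log K(z,\cdot)$, so that Condition $(B)$ plus a single Cauchy estimate in $t$ bounds the Jacobian entries directly: the cross terms are packaged inside $\partial_{\bar t_j}\Psi_\beta$ and never need separate treatment, and boundedness of the components of $T$ is never used. This is a real streamlining of the key lemma, and it isolates transparently why Condition $(B)$ must hold for an open set of base points rather than a single one. Two minor differences are also in your favor: you avoid the paper's normalization step \eqref{normal} by keeping the constants $g^{\bar j\alpha}(p)$ in place, and your Hahn--Mazurkiewicz argument supplies the justification that the paper leaves implicit in its one-line claim that $S(\partial\mathcal B)=\partial\Omega$ forces $\partial\Omega$ to be locally connected.
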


         \medskip

  Corollary \ref{Cor} and Theorem \ref {under biholo} can be regarded as multidimensional generalizations of 
  a theorem of   Carath\'eodory \cite{Cara}  on the continuous extension of biholomorphism up to the closures.
  Our last result  gives a Harnack-type estimate for the Bergman kernel on bounded domains in $\mathbb C^n$ with the   Bergman metric of constant holomorphic sectional curvature. 

           \medskip
           
 \begin{theorem}   \label{kernel similar} Let $  \Omega  $ be a bounded domain whose Bergman metric 
 has  constant holomorphic sectional curvature  in $ \mathbb C^n, n\geq 1$. 
 Then,     
  for any   $p \in \Omega$,   there exists  its small  neighborhood $U_p $   such that
\begin{equation}  \label{2 similar}
 \sup_{\zeta \in U_p } \left|   K(z, \zeta)\right|  \le 2  \left| K(z, p) \right|, \quad \text{for any\, }   z \in \Omega. 
\end{equation}

 \end{theorem}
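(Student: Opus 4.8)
The plan is to recast the kernel inequality \eqref{2 similar} as an estimate for Calabi's diastasis and then read off the required bound from the explicit shape that constant holomorphic sectional curvature imposes on the diastasis. Recall that, relative to the Bergman potential $\log K(z,z)$, the diastasis is the function
$$
D(z,\zeta)=\log\frac{K(z,z)\,K(\zeta,\zeta)}{\abs{K(z,\zeta)}^{2}},
$$
which is defined wherever $K(z,\zeta)\neq 0$ and satisfies $D(z,\zeta)\ge 0$ by the Cauchy--Schwarz inequality for the reproducing kernel. The elementary identity
$$
\frac{\abs{K(z,\zeta)}^{2}}{\abs{K(z,p)}^{2}}=\frac{K(\zeta,\zeta)}{K(p,p)}\;e^{\,D(z,p)-D(z,\zeta)}
$$
shows that \eqref{2 similar} will follow as soon as I exhibit a neighborhood $U_{p}\ni p$ on which, for every $\zeta\in U_{p}$ and every $z\in\Omega$, one has both $K(\zeta,\zeta)/K(p,p)\le 2$ and $D(z,p)-D(z,\zeta)\le\log 2$. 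The former is immediate from the continuity and strict positivity of $K$ on the diagonal, so the entire problem reduces to a bound on the diastasis difference that is uniform in $z$.

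For that uniform bound I would invoke the computation behind Lu's theorem \cite{Lu} together with Calabi's diastasis rigidity (as developed in \cite{DWo}): a negative constant holomorphic sectional curvature $-c^{2}$ forces the diastasis, written through the representative coordinate $T$ relative to $p$ (so $T(p)=0$) and the Hermitian form $\langle u,v\rangle=\sum_{\alpha,\beta}u_{\alpha}g_{\alpha\bar\beta}(p)\overline{v_{\beta}}$ with $\norm{u}^{2}=\langle u,u\rangle$, to agree with that of the model ball $\mathcal B$ of \eqref{ball}: setting $b=c^{2}/2$,
$$
D(z,\zeta)=-\frac1b\log\frac{\bigl(1-b\norm{T(z)}^{2}\bigr)\bigl(1-b\norm{T(\zeta)}^{2}\bigr)}{\bigl\lvert 1-b\langle T(z),T(\zeta)\rangle\bigr\rvert^{2}}
$$
on the complement of $Z_{p}=\{\,K(\cdot,p)=0\,\}$, where $T$ is holomorphic. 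Since $D(z,p)=-\frac1b\log(1-b\norm{T(z)}^{2})\ge 0$, each $T(z)$ lies in $\mathcal B$, i.e. $\norm{T(z)}^{2}<2c^{-2}=1/b$. Substituting $\zeta=p$ and using $T(p)=0$ collapses the difference to
$$
D(z,p)-D(z,\zeta)=\frac1b\log\frac{1-b\norm{T(\zeta)}^{2}}{\bigl\lvert 1-b\langle T(z),T(\zeta)\rangle\bigr\rvert^{2}}.
$$

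Now the estimate is elementary and, crucially, independent of $z$. By Cauchy--Schwarz and $\norm{T(z)}^{2}<1/b$ one gets $b\,\abs{\langle T(z),T(\zeta)\rangle}\le\sqrt b\,\norm{T(\zeta)}=:\eta$, a bound that does not involve $z$; hence $\lvert 1-b\langle T(z),T(\zeta)\rangle\rvert\ge 1-\eta$ and therefore $D(z,p)-D(z,\zeta)\le-\tfrac{2}{b}\log(1-\eta)$. Because $K(p,p)>0$ makes $T$ holomorphic near $p$ with $T(p)=0$, shrinking $U_{p}$ drives $\eta=\sqrt b\,\sup_{\zeta\in U_{p}}\norm{T(\zeta)}$ to $0$, so the right-hand side falls below $\log 2$; combined with $K(\zeta,\zeta)/K(p,p)\le 2$ this yields $\abs{K(z,\zeta)}\le 2\abs{K(z,p)}$ for all $z\in\Omega\setminus Z_{p}$. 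As $Z_{p}$ is a proper analytic subset, hence nowhere dense, the continuity of $z\mapsto K(z,\zeta)$ and $z\mapsto K(z,p)$ propagates the inequality to every $z\in\Omega$, which finishes the argument.

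The main obstacle is the second step: justifying the closed-form diastasis globally on $\Omega\setminus Z_{p}$ from the curvature hypothesis alone, that is, without the Bergman completeness that underlies Lu's original derivation; this rigidity is the geometric core and is precisely where the paper's diastasis machinery must be used. A secondary point to settle is the sign of the curvature — the argument above is written for the negative case, which is the only one compatible with $\Omega$ being bounded, so one should record that a bounded domain carries no Bergman metric of constant nonnegative holomorphic sectional curvature (otherwise the flat or elliptic model would leave $\norm{T(z)}$ unbounded and destroy the uniformity in $z$).
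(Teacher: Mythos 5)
Your strategy is sound and genuinely different from the paper's. The paper never touches the off-diagonal closed form: it applies the mean value inequality to $w \mapsto K(w,z)$ along the segment from $p$ to $\zeta$, controls the gradient at the intermediate point $\eta$ by $C_\eta\,|K(\eta,z)|$ --- this is exactly the estimate \eqref{less}, which encodes only the boundedness of the representative coordinates (that $T$ maps $\Omega$ into $\mathcal B$) --- and then closes a self-improving inequality: once $U_1$ is so small that $\sqrt{n}\,C_{U_p}\,|\zeta - p| < \tfrac12$, one gets $\sup_{U_1}|K(\cdot,z)| \le |K(p,z)| + \tfrac12 \sup_{U_1}|K(\cdot,z)|$, hence \eqref{2 similar} by absorption. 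Your proof replaces this first-order argument by the polarized constant-curvature formula plus Cauchy--Schwarz; it is more conceptual (the constant $2$ splits cleanly into a diagonal-ratio factor and a diastasis-difference factor, each controlled by $2$), but it demands strictly more input: the full off-diagonal rigidity, where the paper needs only \eqref{less}. (Your closing remark about the sign of the curvature costs you nothing; the paper's own proof likewise simply assumes the constant equals $-c^2$.)

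The one genuine gap is the step you flag yourself: the closed form for $D(z,\zeta)$ with $z$ ranging over \emph{all} of $\Omega$ and $\zeta \in U_p$ is not available off the shelf, neither in \cite{Lu} (which assumes completeness) nor in \cite{DWo} as recapped in Section 2.1 of this paper; what is established there is the polarized formula only \emph{locally}, for $z, \zeta$ both in $U_p$, together with the global one-variable identity \eqref{formula} (i.e.\ the case $\zeta = p$). The gap is fillable from exactly these two facts. Under the normalization \eqref{normal} (so your $g(p)$-pairing is the Euclidean one), comparing \eqref{K(z, z)} with \eqref{formula} shows $|e^{f(T(z))}| = |K(z,p)|/\sqrt{K(p,p)}$ on $U_p$; both sides are zero-free and holomorphic, so $e^{f(T(z))} = \lambda\, K(z,p)/\sqrt{K(p,p)}$ with $|\lambda|=1$, and the local polarized kernel formula becomes, for $z,\zeta \in U_p$ and $b = c^2/2$,
$$
K(z,\zeta) \;=\; \bigl(1 - b\,\langle T(z), T(\zeta)\rangle\bigr)^{-1/b}\, \frac{K(z,p)\,\overline{K(\zeta,p)}}{K(p,p)} .
$$
Now fix $\zeta \in U_p$. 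Since $\|T\|^2 < 1/b$ on all of $\Omega$ (by \cite{DWo}), Cauchy--Schwarz places $1 - b\langle T(z),T(\zeta)\rangle$ in the disc $\{|w-1|<1\}$, so the principal-branch power is holomorphic in $z$ on $\Omega$; and $K(\cdot,p)$ is zero-free on $\Omega$ (Lu Qi-Keng, again \cite{DWo}). Both sides are therefore holomorphic in $z$ on $\Omega$ and agree on the open set $U_p$, so the identity theorem extends the formula --- hence your diastasis identity --- to all $z \in \Omega$, $\zeta \in U_p$; in passing it shows $K(z,\zeta)\neq 0$ there, so your final density argument is not even needed. With this lemma inserted, your bound $D(z,p)-D(z,\zeta)\le -\tfrac{2}{b}\log(1-\eta)$ and the shrinking of $U_p$ are correct, and the proof closes.
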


    \medskip
    
 The organization of the paper is as follows. In Section 2, after estimating Bergman's coordinates, we prove our multidimensional results. In Section 3, we prove our one dimensional results.  In Section 4, we study the boundedness of the 
Bergman representative coordinate.

\section{Proofs of Multidimensional Results}

In this section, we characterize bounded   domains that are biholomorphic to a ball possibly less a relatively closed pluripolar set, and give sufficient conditions for the biholomorphisms to extend  continuously   up to the closures. 
We first give some estimates on Bergman's coordinates before proving our multidimensional results, which
generalize Carath\'eodory's theorem to higher dimensions and  link the aforementioned uniformization results of the Lu and Suita types.

 \subsection{Estimates of Bergman's Coordinates}  \label{2.1}

 Recall that a bounded domain $\Omega \subset \mathbb C^n$ is called a Lu Qi-Keng domain if for any  $p \in \Omega$, its Bergman kernel $K(\cdot , p)$  has no zero set, cf. \cite{Bo, Lu}.  The authors  in \cite{DWo}   showed that   
 if its (not necessarily complete) Bergman metric $g$ has constant holomorphic sectional curvature $-c^2$, then the domain $\Omega$ is Lu Qi-Keng and   the Bergman representative coordinate    
$T$ relative to $p$ maps  $\Omega$  to a ball
$
\mathcal B $  defined by \eqref{ball}. Previously, Lu's theorem in \cite{Lu} yields the same  conclusions  under the additional assumption that $\Omega$ is Bergman complete.  
One key step in \cite {DWo} was the use of Calabi’s concept of diastasis. Fix a point $z_0\in \Omega$ and let $A_{z_0}:=\{z\in \Omega \, | \,K(z, z_0) =0 \, \}$ be the zero set  of the Bergman kernel $K(\cdot , z_0)$. Since $A_{z_0}$ is an analytic variety, as domains $\Omega \setminus A_{z_0}$ and $\Omega$ have the same Bergman kernel $K$ and Bergman metric $g$.
Consider on $\Omega \setminus A_{z_0}$ the K\"{a}hler potential
\begin{equation} \label{dia}
\Phi_{z_0}(z):= \log \frac{K(z, z)  K(z_0, z_0)}{ |K(z, z_0)|^2} 
\end{equation}
for the Bergman metric $g=\partial \overline  \partial   \Phi_{z_0}$, and we call the function $\Phi_{z_0}(z)$ the Bergman-Calabi diastasis relative to $z_0$. The idea in  \cite{Lu, DWo} was to investigate locally the Taylor expansion of the K\"ahler potentials for the Bergman metric.
More precisely, at any $p \in \Omega$ with
   \begin{equation} \label{normal}
  g_{\alpha \bar \beta} (p) =\delta_{\alpha  \beta },
\end{equation}
there exists  a neighborhood $U_p$ such that the Bergman kernel on the diagonal can be   decomposed as
\begin{equation}  \label{K(z, z)}
K(z, z)=  \left (1 - \frac{c^2}{2} |T(z)|^2 \right )^{\frac{-2}{c^2}} e^{f(T(z))+\overline{f(T(z))}}, \quad z\in U_p,
\end{equation}
where $f$ is holomorphic on $U_p$.
Let  $\Omega^{\prime}:= \{z \in \Omega \setminus A_p : |T(z)|^2 < {2}{c^{-2}}  \}$ denote the set of points in $\Omega \setminus A_p$ that are mapped by $T$ into a ball 
 $ \mathbb B^n (0, {\sqrt{2} c^{-1} }):=\{(w_1, ... , w_n) :    |w|^2   < {2}{c^{-2}} \}$,
where $A_{p}$ is the zero set  of  $K(\cdot , p)$.  
  Then, by \eqref{K(z, z)} and the theory of power series, one may duplicate the variable with its conjugate so that  the full Bergman kernel can be complex analytically continued as
 $$
K(z, {z_0})= \left (1 - \frac{c^2}{2} \sum_{\alpha=1}^n  w_\alpha(z)     \overline {w_\alpha(z_0)}   \right )^{\frac{-2}{c^2}}   e^{f(T(z))+\overline{f(T({z_0}))}}, \quad z, z_0\in U_p.
$$
Moreover, for any $z, z_0\in U_p$,  it holds that
 $$
\Phi_{z_0}( z) 
  =  {\frac{-2}{c^2}}   \log  \left[    \left (1 - \frac{c^2}{2} |T(z)|^2 \right )  \left (1 - \frac{c^2}{2} |T(z_0)|^2 \right )  \left  | 1 - \frac{c^2}{2}   \sum_{\alpha=1}^n  w_\alpha(z)     \overline {w_\alpha(z_0)}   \right  |^{ -2}  \right  ].
   $$
Then, the symmetry of the Bergman-Calabi diastasis defined in \eqref{dia} further yields that
$$
\Phi_{p}(z_0)= \Phi_{z_0}(p) = {\frac{-2}{c^2}} \log  \left (1 - \frac{c^2}{2} |T(z_0)|^2 \right ).
$$
Since on  $\Omega^{\prime}$ both $\Phi_{p}(z)$  and  ${\frac{-2}{c^2}} \log  \left (1 - \frac{c^2}{2} |T(z )|^2 \right )$ are well-defined, these two real-analytic  functions coincide on  $U_p$, and thus are identical to each other on $\Omega^{\prime}$. That is, 
\begin{equation} \label{on Omega prime}
\Phi_{p}(z )=  {\frac{-2}{c^2}} \log  \left (1 - \frac{c^2}{2} |T(z )|^2 \right ), \quad z \in \Omega^{\prime}.
\end{equation} 
One can show by contradiction that  no point in  $\Omega \setminus A_p$ is mapped outside the ball $ \mathbb B^n (0, {\sqrt{2} c^{-1} }) $ by $T$, so $\Omega^{\prime}=  \Omega \setminus A_p $.
Therefore, \eqref{on Omega prime} in fact holds on $\Omega \setminus A_p $. By the Riemann removable singularity theorem, 
 $T$ extends across the analytic variety $A_p$ to the whole domain $\Omega$ with $|T(z)|^2 \leq {2}{ c^{-2}}$, and
  the maximum modulus principle yields  that $|T(z)|^2 < {2}{ c^{-2}}$  on $\Omega$. 
 The above explicit formula of $\Phi_{p}(z )$ also guarantees that  the zero set   $A_{z_0}=\emptyset$, as shown in \cite{DWo}, so $\Omega$ is a Lu Qi-Keng domain. The above
  methodology was further extended in \cite{DWW}.
 
 \medskip
 
    \medskip
    
   Based on these facts, in this subsection, we   obtain the following estimates.

    \medskip

 \begin{pro}  \label{upper bound} Let $\Omega \subset \mathbb C^n$ be a bounded domain whose Bergman metric $g$ has its holomorphic sectional curvature  identically equal to a negative constant $-c^2$. Then, for any $p \in \Omega$ satisfying \eqref{normal}, 
there exists a finite constant  $ C_p>0$  such that 
for each $\alpha, j = 1, \dotsc, n$,   it holds that
 \begin{equation} \label{good}
\left |K(z,  p)   \frac{\partial w_{\alpha} (z)}{\partial z_j}  \right | \leq \left |  \frac{\partial    }{ \partial {z_j}   } \left(   \left. \frac{\partial }{  \partial  \overline {t_\alpha} } \right|_{t=p}     K(z, t)       \right)  \right| + C_p  \left|  {   \frac{\partial K(z, p)}{\partial z_j   }    } \right|, \quad \forall z\in \Omega.
\end{equation}
If  $\Omega$ additionally satisfies   Condition $(B)$ for some  open set   $U$, then for any $p \in U$ that satisfies \eqref{normal}, each component of  the Bergman representative coordinate $T$ relative to $p$ has its partial derivatives  bounded from above  on $\Omega $ by some finite positive constant.

\end{pro}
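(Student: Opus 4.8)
The plan is to differentiate the defining formula \eqref{rep} and estimate the two resulting pieces separately. Because $p$ satisfies \eqref{normal}, the matrix $[g^{\bar j\alpha}(p)]$ is the identity, so \eqref{rep} reduces to
\[
w_\alpha(z)=\frac{K_{\bar\alpha}(z)}{K(z,p)}-c_\alpha,\qquad K_{\bar\alpha}(z):=\left.\frac{\partial}{\partial\overline{t_\alpha}}\right|_{t=p}K(z,t),\quad c_\alpha:=\left.\frac{\partial}{\partial\overline{t_\alpha}}\right|_{t=p}\log K(t,t),
\]
where $c_\alpha$ is a finite constant and $K_{\bar\alpha}$ is holomorphic in $z$ on all of $\Omega$. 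Since $\Omega$ is Lu Qi-Keng, $K(\cdot,p)$ is zero-free, and the quotient rule together with $K_{\bar\alpha}/K(\cdot,p)=w_\alpha+c_\alpha$ gives the identity
\[
K(z,p)\,\frac{\partial w_\alpha}{\partial z_j}=\frac{\partial K_{\bar\alpha}}{\partial z_j}-\bigl(w_\alpha(z)+c_\alpha\bigr)\frac{\partial K(z,p)}{\partial z_j}.
\]
The factor $w_\alpha(z)+c_\alpha$ is uniformly bounded: the analysis recalled at the start of this subsection gives $|T(z)|^2<2c^{-2}$, hence $|w_\alpha(z)|<\sqrt2\,c^{-1}$, so $|w_\alpha(z)+c_\alpha|\le C_p:=\max_\alpha(\sqrt2\,c^{-1}+|c_\alpha|)$. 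Applying the triangle inequality to the identity yields \eqref{good}.

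For the second assertion I would divide \eqref{good} by $|K(z,p)|>0$. The term $C_p\,|\partial_{z_j}K(z,p)|/|K(z,p)|$ is at once $\le C_p\mathcal C$ by Condition $(B)$, so everything comes down to bounding $|\partial_{z_j}K_{\bar\alpha}(z)|/|K(z,p)|$ uniformly in $z\in\Omega$; this is the crux.

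To handle it, fix $z$ and view $\phi_z(t):=\partial_{z_j}K(z,t)$ as an anti-holomorphic function of $t$, so that $\partial_{z_j}K_{\bar\alpha}(z)=(\partial_{\overline{t_\alpha}}\phi_z)(p)$. Pick $r>0$ with $\overline{P(p,r)}\subset U\cap U_p$, where $U_p$ is the neighborhood furnished by Theorem \ref{kernel similar}. The Cauchy estimate applied to the holomorphic function $\overline{\phi_z}$ on the polydisc $P(p,r)$ gives
\[
\bigl|\partial_{z_j}K_{\bar\alpha}(z)\bigr|\le\frac1r\sup_{\zeta\in\partial_0 P(p,r)}\bigl|\partial_{z_j}K(z,\zeta)\bigr|\le\frac{\mathcal C}{r}\sup_{\zeta\in\overline{P(p,r)}}\bigl|K(z,\zeta)\bigr|,
\]
where the last inequality is Condition $(B)$ (valid since each $\zeta\in\overline{P(p,r)}\subset U$). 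Finally the Harnack-type bound \eqref{2 similar} of Theorem \ref{kernel similar} (or, alternatively, the explicit analytic continuation of the kernel recalled above) gives $\sup_{\zeta}|K(z,\zeta)|\le 2|K(z,p)|$, whence $|\partial_{z_j}K_{\bar\alpha}(z)|\le(2\mathcal C/r)\,|K(z,p)|$ and therefore $|\partial_{z_j}w_\alpha(z)|\le 2\mathcal C/r+C_p\mathcal C$ on $\Omega$, as claimed. The only real obstacle is this last step: Condition $(B)$ controls derivatives in the first variable only, so recovering $K(z,p)$ from the $\overline{t}$-derivative $\partial_{z_j}K_{\bar\alpha}$ needs both a Cauchy estimate in the second variable—harmless because $p$ is interior and $P(p,r)$ stays away from $\partial\Omega$—and the comparison $\sup_\zeta|K(z,\zeta)|\lesssim|K(z,p)|$ supplied by Theorem \ref{kernel similar}.
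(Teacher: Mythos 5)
Your proof of \eqref{good} matches the paper's: both arguments use the containment $T(\Omega)\subset\mathbb B^n(0,\sqrt 2\,c^{-1})$ recalled from \cite{DWo} to get $|K_{\bar\alpha}(z)|\le C_p|K(z,p)|$ (the paper's \eqref{less}, identical to your bound $|w_\alpha+c_\alpha|\le C_p$), followed by the quotient rule and the triangle inequality. For the Condition $(B)$ half, however, you take a genuinely different route. The paper never compares $\sup_\zeta|K(z,\zeta)|$ with $|K(z,p)|$: it applies the Cauchy estimate in the $t$-variable to the quotient $\partial_{\bar z_j}K(t,z)/K(t,z)$, which Condition $(B)$ --- crucially, its ``for all $p\in U$'' clause --- bounds by $\mathcal C$ uniformly on $U$, and then undoes the quotient with the product rule, picking up an extra $\mathcal C\cdot C_p$ via \eqref{less}; this yields $|\partial_{z_j}w_\alpha|\le \mathcal C/r_p+2\mathcal C\,C_p$ entirely within Section 2.1. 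You instead apply the Cauchy estimate to $\partial_{z_j}K(z,t)$ itself, which forces the comparison $\sup_{\zeta\in U_p}|K(z,\zeta)|\le 2|K(z,p)|$, imported from Theorem \ref{kernel similar}. That is legitimate but carries a forward-reference risk you should address explicitly: Theorem \ref{kernel similar} is proved in Section 4 of the paper \emph{using} \eqref{less} from the proof of the present proposition. There is no circularity only because \eqref{less} is established (by you and by the paper) without Condition $(B)$, so the admissible logical order is: first \eqref{less} and \eqref{good}, then Theorem \ref{kernel similar}, then your Condition-$(B)$ step. With that ordering made explicit your argument is complete, and the two proofs give bounds of the same shape ($2\mathcal C/r+\mathcal C\,C_p$ versus $\mathcal C/r_p+2\mathcal C\,C_p$); what the paper's quotient trick buys is self-containedness, while yours buys modularity --- once the Harnack-type estimate is available, the derivative bound is immediate.
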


\begin{proof}

By   the previous discussion,   the Bergman representative coordinate $T$ relative to $p$ which satisfies \eqref{normal} maps $\Omega$ to a ball 
 $ \mathbb B^n (0, {\sqrt{2} c^{-1} }) $. It then follows that
$$
\sum_{j =1}^n  \left | K^{-1}(z, p)  \left. \frac{\partial}{\partial \overline {t_j} } \right|_{t=p} K(z, t)  -    \left. \frac{\partial}{\partial \overline {t_j} } \right|_{t=p}  \log K(t, t)\right |^2   < {2}{c^{-2}}, \quad \forall z\in \Omega,
$$
which further implies that
$$
\sum_{j =1}^n  \left|  K^{-1}(z, p)  \left. \frac{\partial}{\partial \overline {t_j} } \right|_{t=p} K(z, t)  \right|  \leq  \sum_{j =1}^n  \left|    \left. \frac{\partial}{\partial \overline {t_j} } \right|_{t=p}  \log K(t, t) \right| +  n {\sqrt 2}{c^{-1}}  =:C_p, \quad \forall z\in \Omega.
$$
Here $C_p$ is a finite positive constant depending on $p$ since the Bergman kernel is locally uniformly bounded.
Thus, for each $j = 1, \dotsc, n$,
\begin{equation} \label{less}
\left|  \left. \frac{\partial}{\partial \overline {t_j} } \right|_{t=p} K(z, t)  \right|  \leq  C_p |K(z, p)|, \quad \forall z\in \Omega.
\end{equation}
By the definition  \eqref{rep}, we know that for each $\alpha, j = 1, \dotsc, n$,  
\begin{align*}
\frac{\partial w_{\alpha} (z)}{\partial z_j}  &=  \frac{\partial }{\partial z_j} \left\{K^{-1}(z, p) \left. \frac{\partial}{\partial \overline {t_\alpha} } \right|_{t=p} K(z, t)\right\} \\
&= \frac{ K(z, p)  \frac{\partial    }{ \partial {z_j}   } \left(   \left. \frac{\partial }{  \partial  \overline {t_\alpha} } \right|_{t=p}     K(z, t)       \right)  -   \frac{\partial K(z, p)}{\partial z_j   }  \left. \frac{\partial }{  \partial \overline {t_\alpha} } \right|_{t=p}  K(z, t)    }{K^2(z, p)}, \quad \forall z\in \Omega.
\end{align*}
This combined with  \eqref{less} will yield  that for any $z\in \Omega$, 
\begin{align*}
\left |K(z,  p)   \frac{\partial w_{\alpha} (z)}{\partial z_j}  \right | & \leq  \left |  \frac{\partial    }{ \partial {z_j}   } \left(   \left. \frac{\partial }{  \partial  \overline {t_\alpha} } \right|_{t=p}     K(z, t)       \right) \right |  +  \left |  \frac{ \frac{\partial K(z, p)}{\partial z_j   }  \left. \frac{\partial }{  \partial \overline {t_\alpha} } \right|_{t=p}  K(z, t)  }  {K(z, p) } \right | \\
& \leq   \left |    \frac{\partial    }{ \partial {z_j}   } \left(   \left. \frac{\partial }{  \partial  \overline {t_\alpha} } \right|_{t=p}     K(z, t)       \right) \right |  +  C_p \left |   \frac{\partial K(z, p)}{\partial z_j   }      \right |.
\end{align*}

If  $\Omega$ additionally satisfies   Condition $(B)$ for some  open set   $U$, then for any $p=(p_1, \dotsc, p_n) \in U \subset \Omega$ that satisfies \eqref{normal},
take a small polydisc $\mathbb D^n(p; r_p)  \subset U$  for some  $ r_p >0$.
By Cauchy's integral formula for derivatives,  for each $\alpha, j = 1, \dotsc, n$, it holds that
  \begin{align*}
 \left|  \left. \frac{\partial }{  \partial  {t_\alpha} } \right|_{t=p}   \left(  \frac{  \frac{\partial    }{ \partial \overline {z_j}   } K(t, z) } {K(t, z)}    \right)  \right| 
& =  \left|  \frac{1}{2\pi i} \int_0^{2\pi} \left( \frac{\frac{\partial   }{ \partial \overline {z_j}   } K( (p_1, \dotsc ,  p_\alpha +  r_p e^{i \theta}, \dotsc,   p_n), z)}{K((p_1, \dotsc ,  p_\alpha +  r_p e^{i \theta}, \dotsc,   p_n), z)} \right)  \frac{ r_p e^{i \theta} i d \theta}{(r_p e^{i \theta})^2   }      \right|\\
& \leq  \frac{1}{r_p}  \sup_{t \in U } \left|   \frac{ \frac{\partial    }{ \partial \overline {z_j}   } K( t , z) } {K( t , z) }  \right|\\
& \leq  \frac{\mathcal C}{r_p}, \quad \forall  z\in \Omega.
\end{align*} 
  Thus,  by assumption,   
\begin{align*}
\left| \frac{ \left. \frac{\partial }{  \partial  {t_\alpha} } \right|_{t=p}   \left(    \frac{\partial    }{ \partial \overline {z_j}   } K(t, z)       \right) } {K(p, z)}  \right| & = \left|  \left. \frac{\partial }{  \partial  {t_\alpha} } \right|_{t=p}   \left(  \frac{  \frac{\partial    }{ \partial \overline {z_j}   } K(t, z) } {K(t, z)}    \right) + \frac{ { \frac{\partial    }{ \partial  \overline {z_j}   } K(p, z) } }{K (p, z)}     \frac{ \left. \frac{\partial }{  \partial  {t_\alpha} } \right|_{t=p}  K(t, z)}{K(p, z)}\right|\\
& \le  \frac{\mathcal C}{r_p} + \mathcal C \cdot C_p,    \quad \forall z\in \Omega.
\end{align*}  
Therefore, by  Estimate \eqref{good},   it holds  that 
 \begin{align*}  
\left |  \frac{\partial w_{\alpha} (z)}{\partial z_j}  \right | & \leq \left| \frac{ \left. \frac{\partial }{  \partial  {t_\alpha} } \right|_{t=p}   \left(    \frac{\partial    }{ \partial \overline {z_j}   } K(t, z)       \right) } {K(p, z)}  \right| + C_p  \left| \frac{    \frac{\partial K(z, p)}{\partial z_j   }    }  {K(z,  p) }\right|\\
&   \leq  \frac{\mathcal C}{r_p} + 2\mathcal C \cdot C_p,  \quad \forall z\in \Omega.
\end{align*}  

 \end{proof}

      \medskip
     
Compared with  Definition \ref{def-local connect}, there is a stronger local convexity.

   \begin{definition} \label{def-local convex}
Let $\Omega$ be a bounded domain in  $ \mathbb C^n, n \ge 1$.
We say $\Omega$ is  locally $C^1$-diffeomorphic to a convex domain around   some $q \in \partial \Omega$, 
if
there exists an open set $U \ni q$ and a $C^1$ diffeomorphism $\varphi: U \to \mathbb R^{2n}$ such that 
$\varphi(U \cap \Omega)$ is a convex set in $\mathbb R^{2n}$.
  \end{definition}

  For  a domain $\Omega \subset  \mathbb C^n$ with piecewise $C^1$-smooth boundary, take a boundary point  $q \in \partial \Omega$. Then there exists a neighborhood  $U \ni q$ in $\mathbb C^n$ and a $C^1$-diffeomorphism $\varphi : U \to \mathbb B^n$ such that $\varphi (U \cap  \Omega) =  \mathbb B^n  \cap \Omega^{\prime}$, where $\Omega^{\prime}$ is a convex polyhedron. Thus,  according to Definition \ref {def-local convex}, such $\Omega$ is  locally $C^1$-diffeomorphic to a convex domain around  $q $. Moreover, we get
   
      \medskip
      
  \begin{pro}  \label{stronger} For  a bounded domain $\Omega$   in  $ \mathbb C^n, n \ge 1$, if it  is  locally $C^1$-diffeomorphic to a convex domain around   some $q \in \partial \Omega$, then 
 it satisfies  local $C^1$-connectivity around   $q.$
     \end{pro}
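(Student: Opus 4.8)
The plan is to transport the straight-line connectivity of the convex model back to $\Omega$ through the diffeomorphism $\varphi$, controlling path lengths by a local Lipschitz bound on $\varphi^{-1}$. The convexity hypothesis is used to guarantee that the pulled-back curve never leaves $\Omega$, while the $C^1$-regularity of $\varphi^{-1}$ delivers the quantitative length estimate demanded by Definition \ref{def-local connect}.

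First I would fix the open set $U \ni q$ and the $C^1$-diffeomorphism $\varphi \colon U \to \mathbb{R}^{2n}$ supplied by Definition \ref{def-local convex}, so that $V := \varphi(U \cap \Omega)$ is a convex set. Since $\varphi(U)$ is open and contains $\varphi(q)$, I would choose $\rho > 0$ with $\overline{B}(\varphi(q), \rho) \subset \varphi(U)$ and set $L := \sup_{\overline{B}(\varphi(q), \rho)} \| D\varphi^{-1} \|$, which is finite because $\varphi^{-1}$ is $C^1$ on the compact ball. This $L$ serves as a Lipschitz constant for $\varphi^{-1}$ over that ball and hence as a uniform bound converting Euclidean segment lengths into lengths of their preimages.

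Given $\epsilon > 0$, I would pick $\delta \in (0, \rho]$ with $2 L \delta < \epsilon$ and define $U_\epsilon := \varphi^{-1}(B(\varphi(q), \delta))$, an open neighborhood of $q$ contained in $U$. For any $x, y \in U_\epsilon \cap \Omega$, both $\varphi(x)$ and $\varphi(y)$ lie in $B(\varphi(q), \delta) \cap V$. The crucial observation is that the straight segment $\sigma(t) = (1-t)\varphi(x) + t\varphi(y)$, $t \in [0,1]$, stays inside the ball $B(\varphi(q), \delta)$ because balls are convex, and simultaneously stays inside $V$ because $V$ is convex; hence $\sigma \subset B(\varphi(q), \delta) \cap V$. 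Setting $\gamma_{x,y} := \varphi^{-1} \circ \sigma$ then produces a $C^1$ path contained in $U \cap \Omega \subset \Omega$ joining $x$ and $y$, and estimating its length by $L$ times the segment length $|\varphi(x) - \varphi(y)| < 2\delta$ gives length less than $2 L \delta < \epsilon$, which is exactly the requirement of Definition \ref{def-local connect}.

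I do not anticipate a serious obstacle, as the argument is essentially soft. The only two points requiring care are ensuring the pulled-back curve remains inside $\Omega$, which is precisely what the convexity of $V$ buys together with the convexity of the ball, and obtaining a length bound uniform over $U_\epsilon \cap \Omega$, which follows from the local boundedness of $D\varphi^{-1}$ on the fixed compact ball around $\varphi(q)$. Both are automatic once $\delta \le \rho$ is imposed so that the chosen neighborhood $U_\epsilon$ sits inside the region where the Lipschitz estimate is valid.
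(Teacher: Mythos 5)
Your proof is correct and follows essentially the same route as the paper's: pull the straight segment joining $\varphi(x)$ and $\varphi(y)$ back through $\varphi^{-1}$ and bound the length of the preimage by a uniform bound on the derivative of $\varphi^{-1}$ near $\varphi(q)$. If anything, your write-up is slightly tidier: you make explicit why the segment stays inside $\varphi(U\cap\Omega)$ (intersection of the convex set $V$ with the convex ball $B(\varphi(q),\delta)$), a point the paper's choice of $U_\epsilon$ leaves implicit, and your one-sided Lipschitz bound on $\varphi^{-1}$ suffices where the paper invokes two-sided bounds $m_1,m_2$ on $d\varphi$.
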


 \begin{proof}  Denote the $C^1$-diffeomorphic image of $U$ by $V:=\varphi (U)$.  Shrink $V$ if necessary so that there exist constants $m_1, m_2>0$ with 
 \begin{equation} \label{d varphi}
m_1 < \frac{|d \varphi (W)|}{|W|} < m_2, 
\end{equation}  
 for any non-zero vector $W$ in the tangent space of any $z \in (U \cap \Omega)$.
For any $\epsilon >0$, choose $U_\epsilon \ni q $, a  subset  of  $U$, such that the  diameter of the convex set 
$V_\epsilon:= \varphi (U_\epsilon  \cap \Omega)$  is less than $ m^{-1}_1 \epsilon$.
 For any  $x, y \in (U_\epsilon  \cap \Omega)$, join $\varphi(x)$ and $\varphi(y)$ in the convex set $V_\epsilon$ by a straight line called $L$.  Then, $\gamma_{x, y}:= \varphi^{-1}(L)$ would be a  $C^1$ path  in $(U_\epsilon  \cap \Omega)$ joining $x$ and $y$.
 Moreover, by \eqref{d varphi}, the length of  $\gamma_{x, y}$, denoted by $\|\gamma_{x, y}\|$  would satisfy
 \begin{equation} \label{equi}
 m_1 \|\gamma_{x, y}\|  \leq  |\varphi(x) - \varphi(y)|  \leq  m_2 \|\gamma_{x, y}\|.
 \end{equation} 
 Since $|\varphi(x) - \varphi(y)|$ is less than $ m^{-1}_1 \epsilon$, it holds that $\|\gamma_{x, y}\|< \epsilon$.

 \end{proof}

     \medskip

          \begin{remark}   \label{Example C^1 conn}

        Recall that 
a domain $\Omega$ is strictly pseudoconvex near $q \in \partial \Omega$ if and only if $\Omega$ is strictly Euclidean convex with respect to suitable local holomorphic coordinates centered at $q \in \partial \Omega$.
Thus, if $\Omega \subset \mathbb C^n$ is a convex domain, a strictly pseudoconvex domain or a domain with piecewise $C^1$-smooth boundary, then $\Omega$ is  locally $C^1$-diffeomorphic to a convex domain around any boundary point $q \in \partial \Omega$. Consequently, by Proposition \ref{stronger},  a bounded domain   satisfies  local $C^1$-connectivity around  any boundary point, if it is a convex domain, a strictly pseudoconvex domain or a domain with piecewise $C^1$-smooth boundary.
See also Example \ref {counter} for   a domain not satisfying  local $C^1$-connectivity.

        \end{remark}

\subsection{Proof of the Main Theorem}

The   theorem  below will play a crucial role in the proof of our Theorem \ref{with B}.

     \medskip

\begin{theorem} {\normalfont\cite{DWo}}   \label{biholo} Let $\Omega \subset \mathbb C^n$ be a bounded domain whose Bergman metric has its holomorphic sectional curvature  identically equal to a negative constant $-c^2$. Then, for some  $z_0\in \Omega$ , $\Phi_{z_0}(z)$ blows up to infinity at $\partial \Omega$   if and only if   $\Omega$ is biholomorphic to a ball  and $n = 2/c^2-1$.
    
\end{theorem}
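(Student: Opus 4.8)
The plan is to prove the two implications separately, leaning on the explicit description of the Bergman representative coordinate $T$ established in Subsection \ref{2.1}. Throughout I may assume, after composing with a linear change of coordinates (a biholomorphism that alters neither the hypotheses nor the conclusions, and under which the diastasis is unchanged since $\Phi$ is a biholomorphic invariant of the Bergman metric), that the distinguished point $z_0$ is the base point $p$ and that the normalization \eqref{normal} holds. Subsection \ref{2.1} then supplies the global identity $\Phi_{p}(z) = -\frac{2}{c^2}\log(1 - \frac{c^2}{2}|T(z)|^2)$ on all of $\Omega$, together with $T(\Omega)\subset \mathbb B^n(0,\sqrt 2 c^{-1})$ and the fact that $\Omega$ is Lu Qi-Keng, so $K(\cdot,p)$ never vanishes and $T$ is everywhere defined.

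For the forward direction, suppose $\Phi_{p}$ blows up at $\partial\Omega$. Reading the displayed identity backwards, $\Phi_p(z)\to\infty$ forces $1-\frac{c^2}{2}|T(z)|^2\to 0^+$, i.e. $|T(z)|^2 \to 2c^{-2}$ as $z\to\partial\Omega$; hence the preimage under $T$ of any compact subset of $\mathbb B^n(0,\sqrt 2 c^{-1})$ stays away from $\partial\Omega$ and is compact, so $T$ is a \emph{proper} holomorphic map into the ball. Next I would note that $T$ is a local biholomorphism at every point: applying $\partial\bar\partial$ to the identity gives $g = T^{*}\tilde g$, where $\tilde g$ is the constant-curvature Kähler metric on the ball with potential $-\frac{2}{c^2}\log(1-\frac{c^2}{2}|w|^2)$, and since $g$ is positive definite, $dT$ is nonsingular throughout $\Omega$. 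A proper local biholomorphism onto the connected, simply connected ball has open image (local homeomorphism) and closed image (properness), hence is surjective, and is an unbranched finite covering; simple connectivity then forces it to be one-sheeted, i.e. a biholomorphism. (Equivalently, one proves global injectivity directly: if $T(z_1)=T(z_2)$, then the two-point form of the diastasis—valid on all of $\Omega\times\Omega$ by real-analytic continuation from $U_p$—gives $\Phi_{z_1}(z_2)=0$, so equality holds in $|K(z_1,z_2)|^2\le K(z_1,z_1)K(z_2,z_2)$; thus $K(\cdot,z_1)$ and $K(\cdot,z_2)$ are proportional, and testing against the constant and the coordinate functions in $A^2(\Omega)$ yields $z_1=z_2$.) Either way $T:\Omega\to\mathbb B^n(0,\sqrt 2 c^{-1})$ is a biholomorphism, so $\Omega$ is biholomorphic to a ball; and since the Bergman metric of a ball in $\mathbb C^n$ has constant holomorphic sectional curvature $-2/(n+1)$, matching with the hypothesis $-c^2$ gives $c^2 = 2/(n+1)$, that is, $n = 2/c^2 - 1$.

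For the backward direction, let $F:\Omega\to B$ be a biholomorphism onto a ball with $n = 2/c^2-1$. The transformation rule $K_\Omega(z,z') = K_B(F(z),F(z'))\,\det F'(z)\,\overline{\det F'(z')}$ makes the diastasis a biholomorphic invariant, so $\Phi^{\Omega}_{z_0}(z) = \Phi^{B}_{F(z_0)}(F(z))$. A direct computation from the explicit Bergman kernel of $B$ shows $\Phi^{B}_{F(z_0)}(w)\to+\infty$ as $w\to\partial B$, for balls of any radius. Since $F$ is a biholomorphism it is proper, so $F(z)\to\partial B$ as $z\to\partial\Omega$; composing, $\Phi^{\Omega}_{z_0}(z)\to\infty$ at $\partial\Omega$. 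The condition $n=2/c^2-1$ is precisely what reconciles the ball's curvature $-2/(n+1)$ with the assumed $-c^2$, so the two sides of the equivalence are consistent.

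The substantive obstacle is the forward passage from the \emph{interior} blow-up of $\Phi_{z_0}$ to the \emph{global} biholomorphy of $T$. Properness of $T$ is immediate from the potential formula, but turning a proper local biholomorphism into an honest biholomorphism onto the whole ball requires either the covering-space argument (and the remark that the ball is simply connected) or a full proof of injectivity; the latter rests on extending the two-point diastasis identity from $U_p$ to all of $\Omega\times\Omega$ by real-analytic continuation together with the sharp equality case of the Cauchy-Schwarz/Bergman inequality. I expect the justification of this analytic continuation, and the injectivity it yields, to be the most delicate step, with the properness bookkeeping linking the interior behavior of $\Phi_{z_0}$ to the geometry of $\partial\Omega$ the next most technical.
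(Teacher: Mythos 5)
Your proposal is correct, but note that this paper never proves Theorem \ref{biholo} itself --- it is imported from \cite{DWo} as a black box, with only the supporting machinery (the global identity \eqref{formula}, the inclusion $T(\Omega)\subset \mathbb B^n(0,\sqrt2 c^{-1})$, and the Lu Qi-Keng property) reproduced in Subsection \ref{2.1}. Your argument --- properness of $T$ read off from the diastasis formula, nondegeneracy of $dT$ from $g=T^*\tilde g$, and then either the proper-local-homeomorphism/covering argument over the simply connected ball or injectivity via the two-point diastasis and the equality case of Cauchy--Schwarz --- is exactly the route underlying \cite{DWo} (the proof there not relying on Lu's theorem), so it is essentially the same approach rather than a new one.
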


    \medskip

  Under the constant holomorphic sectional curvature assumption, two proofs of the above equivalence using and not using Lu's theorem were given in \cite{DWo}. For a general bounded domain $\Omega$, as demonstrated  in  \cite[Proposition 3.1]{DWo}, the fact that for some  $z_0\in \Omega$, $\Phi_{z_0} $ blows up to infinity at $\partial \Omega$  does not imply the completeness of the Bergman metric of $\Omega$.

 \begin{proof} [{\bf Proof of Theorem \ref{with B}}]
 
 {\bf Step 1, conclusion of a biholomorphism.}   For any boundary point $q\in \partial \Omega$ such that 
 \begin{equation} \label{finite}
 \limsup \limits_ {\Omega \ni z\to q} K(z, z) < \infty,
 \end{equation}  
a result of Pflug and Zwonek \cite{PZ} says that $q \in \text{int} (\overline \Omega)$ and there exists a neighborhood $U$ of $q$ such that $P:=U \setminus \Omega$ is a pluripolar set. 
In particular, both the  Bergman kernel $K(z, p)$ and the Bergman representative coordinate $T (z) =(w_1, ... , w_n)$  extend across $q$.
Also, we {\bf claim} that  
the pluriharmonic function 
$$h_p(z):= 2 \log   |K(z, p)|$$
satisfies  
 \begin{equation} \label{not infit}
\liminf_{z \to q} h_p(z) > -\infty.
 \end{equation} 
If not, then there exists a sequence of points $ (z_j)_{j\in \mathbb N} \subset \Omega$ approaching $q$ such that
$$
\lim_{j \to \infty} |K(z_j, p)| = 0.
$$
By the continuity of $K(z, p)$, this forces $K(z, p)$ to attain zero at the extended point $q$, which is impossible in view of \cite{DWo}.
Consequently,  by the above claim,
$h_p(z) $  extends across $q$.

\medskip

 Let $w \in \partial \Omega$ be an arbitrary boundary point  such that 
$$
\limsup_{\Omega \ni z \to w} K(z, z) = \infty.
$$
  Condition $(B)$ implies that  for each $  j \in \{1, \dotsc, n\}$,
    $$
 \left|      \frac{\partial    h_p}{\partial z_j   }     (z)   \right| =  \left|      \frac{\partial \log   (K(z, p) \overline {K(z, p)})}{\partial z_j   }       \right| =    \left|  \frac{   \frac{\partial }{\partial z_j   }  K(z, p)  }  { K(z, p)} \right|     \leq \mathcal C, \quad \forall z  \in \Omega.
$$
Using the  local $C^1$-connectivity around $w$,    one  shows that for a piece-wise $C^1$ path $\gamma_{x,y}$ lying  in a neighborhood of $w$ and joining $x$ and $y$, its length
  $\|\gamma_{x,y}\|$  would satisfy 
  $$
  |h_p(x) -h_p(y)| \leq   \mathcal C     \|\gamma_{x,y}\| \to 0,
  $$ 
   as $x$ and $y$ both tend to $w$. Thus, 
   $h_p (z) $ is continuous up to  $w$, and by \eqref{not infit}
 we conclude that $h_p (z) $ is continuous up to  the whole $\overline\Omega $. Then, there exists a finite constant $\tilde{\mathcal C}_p>0$ such that
$|h_p | < \tilde{\mathcal C}_p$ on $\overline  \Omega$. Therefore, 
 \begin{equation} \label{upper bound K}
|K(z, p)| = e^{\frac{h_p (z)}{2}} \le e^{\frac{|h_p (z)|}{2}} \le e^{\frac{\tilde{\mathcal C}_p}{2}} =:  {\mathcal C}_p \in \mathbb R^+, \quad \forall z\in \Omega,
 \end{equation}
which yields that   
 $$
   \limsup \limits_ {\Omega \ni z\to w} \Phi_{p}( z) \geq   \limsup \limits_ {\Omega \ni z\to w}\log \frac{K(z, z) K(p, p)} { \mathcal C_p ^2} = \infty.
 $$

\medskip
 
Next, we   assume that there exists some point $p=(p_1, \dotsc, p_n) \in U \subset \Omega$   with \eqref{normal}.  
By Proposition \ref{upper bound} and the  local $C^1$-connectivity around $w$, one can show that for $x, y \in \Omega$ close to $w$,
 $$
 |T(x) -T(y)| \leq ( \frac{\mathcal C}{r_p} + 2\mathcal C \cdot C_p)  \|\gamma_{x,y}\|.
 $$
Similarly,   $T (z) $ is continuous up to $w$ and thus up to the whole  $\overline\Omega $.
 Therefore, for any two sequences of points $ (z_j)_{j\in \mathbb N}, (w_j)_{j\in \mathbb N} \subset \Omega$, both approaching $w$, it holds that
 $$
\lim_{j \to \infty}T(z_j)=\lim_{j \to \infty}T(w_j).
 $$
  By the   explicit formula  of  the Bergman-Calabi diastasis  
     \begin{equation}  \label{formula}
\Phi_{p}( z) =  {\frac{-2}{c^2}} \log      \left (1 - \frac{c^2}{2} |T(z)|^2 \right ), \quad z \in  \Omega,
  \end{equation} 
we know that
 $$
\lim \limits_ {\Omega \ni z\to w} \Phi_{p}( z)= \infty.
$$

\medskip

Taking all boundary points $q_j$ that satisfy \eqref{finite}, we get the corresponding neighborhoods $U_j$ and pluripolar sets $P_j$. Then the (bounded) domain
$$
\tilde\Omega:= \bigcup_{j}U_j \cup \Omega
 $$
has the same Lebesgue measure as $\Omega$ due to the pluripolarity. In view of \cite{PZ}, $\partial \tilde \Omega$ coincides with the non-pluripolar part of $\partial \Omega$.   Moreover, 
as domains $\tilde\Omega$ and $\Omega$ have the same Bergman metric (with constant holomorphic sectional curvature). To see this, notice that $P_j=U_j \cap \partial \Omega$ is relatively closed in $U_j$. 
So, restricting any function $f \in L^2 \cap \mathcal O (\Omega)$ to $U_j\setminus P_j$, we get  a function $F \in L^2 \cap \mathcal O (U_j)$ such that $F=f$ on $U_j\setminus P_j$. Hence one has an
$L^2$ holomorphic extension to $\Omega \cup U_j$, and consequently to $\tilde\Omega$.
By the above discussions, 
we know that the Bergman-Calabi diastasis $\Phi_{p}$  blows up to infinity at $\partial \tilde \Omega$, since  $w$ is arbitrary. 
Then, Theorem \ref{biholo} guarantees that $\tilde\Omega$ is biholomorphic to a  ball  
 and $n = 2/c^2-1$. 
Define the set 
$$
E:=\bigcup\limits_{j}P_j = \bigcup\limits_{j} U_j \cap \partial \Omega = \tilde\Omega \cap \partial \Omega,
$$ 
which is  relatively closed in $\tilde\Omega$.
Since the biholomorphic (pre)images and countable union of pluripolar sets are still pluripolar, we conclude that $T (z)  $ is biholomorphic from $\Omega$  to a ball 
 $ \mathbb B^n (0, {\sqrt{2} c^{-1} })$ 
 possibly less a relatively closed pluripolar set $E$.

 \medskip

Finally, if $p $ is some general point in the open set $U$ for which  $\Omega$ satisfies Condition $(B)$, let  
    $g_{\alpha \bar \beta} (p)$ be the positive-definite  Hermitian matrix associated with the Bergman metric at $p$.
One performs a   linear transformation $F$ from $\Omega$ to  $\Omega^1$ such that the Bergman metric $g^1$ on $\Omega_1 $  satisfies 
$
  g^1_{\alpha \bar \beta} (F (p)) =\delta_{\alpha  \beta }.
  $
 Since $F$ is a biholomorphism, the Bergman metric on $\Omega^1 $ also has  its holomorphic sectional curvature identically equal to a negative constant $-c^2$.  
   Moreover, the Bergman kernels on $\Omega$ and $\Omega^1$  differ by a multiple constant,
  which is the determinant square of  $g_{\alpha \bar \beta} (p)$, due to the transformation rule. Therefore, $\Omega^1$ satisfies Condition $(B)$ for the open set $F (U) $. By the previous argument,   the Bergman representative coordinate $T^1 (z)  $   relative to $F (p)$ is 
 biholomorphic from $\Omega^1$  to a ball  $\mathbb B^n_r$
 possibly less a relatively closed pluripolar set, and  extends  continuously up to   $\overline{\Omega^1}$. Finally, the composition map $F^{-1} \circ T^1 \circ F$ is the Bergman representative coordinate $T (z)  $   relative to $ p$ and it is 
 biholomorphic from $\Omega$  to a ball  $\mathcal B$  defined in \eqref{ball}
 possibly less a relatively closed pluripolar set $E$.   The  continuous extension  up to   $\overline{\Omega}$
   follows due to the   linearity of   $F$.

  \medskip

 {\bf Step 2, extension as  a homeomorphism.} Since the pluripolar set $E$ is negligible for $L^2$ holomorphic functions, the  transformation formula of the Bergman kernel yields that 
\begin{equation} \label{transf}
K(z, p)  = D_{T} (z) \cdot \overline{D_{T}(p) }\cdot K_{\mathcal B \setminus E}\left(T (z),  0\right) = D_{T} (z) \cdot  K_{\mathcal B}\left(0,   0\right)= \frac{D_{T} (z)} {v( \mathcal B)},   \quad \forall  z \in \Omega,
\end{equation}
where $D_{T} $ is the determinant of the complex Jacobian of $T$ and $v(\cdot)$ denotes the Euclidean volume. The second equality in \eqref{transf} holds  due to the explicit formula of the Bergman kernel on the ball $\mathcal B$.
   Let $S$ be the inverse map of $T $. 
By the inverse function theorem, 
the complex Jacobian   of $S$ is the inverse  of the complex Jacobian   of $T$, and
\begin{equation} \label{inverse}
JS = (JT)^{-1} = \frac{\text{Adj}(JT)}{D_{T} },
\end{equation} 
where $Adj (JT)$ is the adjugate matrix of $JT$, the complex Jacobian of $T$.  By Proposition \ref{upper bound}, each entry of $Adj (JT)$ is bounded from above  on $\Omega $ by some finite positive constant.
 By \eqref{upper bound K}, we know that
$$
|K(z, p)|^{-1} = e^{-\frac{h_p (z)}{2}} \le e^{\frac{|h_p (z)|}{2}} \le    {\mathcal C}_p, \quad \forall z\in \Omega,
$$ 
which means
 $$
 |K(z, p)|  \ge {\mathcal C}_p^{-1} >0, \quad \forall z\in \Omega.
 $$
Therefore, by  \eqref{transf},
$$
| {D_{T} (z)} |= {|K(z, p)| v( \mathcal B)} \geq  { {\mathcal C}_p^{-1} }  {v( \mathcal B)} , \quad \forall z \in \Omega,
$$
so each component  of  $S$ is a bounded holomorphic function whose partial derivatives  are bounded  from above by some  finite constant  on $\mathcal B \setminus E$.
 Since $S$ extends across $E$, and the boundary of $\mathcal B$ is smooth, 
by an elementary calculus argument  involving the uniform continuity, we  conclude that $S $ is Lipschitz and thus continuous up to   $\overline {\mathcal B }$. This further implies that $T    $ extends to a homeomorphism of the closures.

   \end{proof}

\noindent{}{\bf Remarks.}   
       
               \medskip
               
\begin{enumerate}[label=(\roman*)]

\item 
In Theorem \ref{with B}, by the continuity of $T^{-1}$, the inverse of the Bergman representative coordinate  $T$,   we can see that  $T (\partial \Omega) = \partial ({\mathcal B \setminus E})$.
If $T$ maps two points  $r, s \in \partial \Omega$   to a single point  in $ \partial ({\mathcal B \setminus E})$,  let $\{r_j\}_{j\in \mathbb N}, \{s_j\}_{j\in \mathbb N} $ denote two sequences of interior points in  $\Omega$ such that 
$$
\lim_{j\to \infty} r_j =r, \quad \lim_{j\to \infty} s_j =s.
$$
By the continuity of the Bergman representative coordinate  $T$ and 
its  inverse $T^{-1}$,
$$
\lim_{j\to \infty}   T(r_j) =  T(r)= T(s) = \lim_{j\to \infty}   T(s_j),
$$
and thus 
$r= s$, which implies that  $T:  \overline{\Omega} \to \overline  {\mathcal B \setminus E}$ is  one-to-one.

  \medskip

\item 

 In our proof of Theorem \ref{with B},   it suffices to require the path $\gamma_{x, y}$ in Definition \ref{def-local connect} to be rectifiable.

\item 

   The pseudoconvexity in Theorem \ref{with B}  is a necessary assumption. To see this, remove from the unit ball $\mathbb B^n, n \geq 2$, a compact subset $G$ of Lebesgue $\mathbb R^{2n}$-measure zero such that $\mathbb B^n \setminus G$ is connected. Then, the Bergman metric on $\mathbb B^n \setminus G$ extends to $\mathbb B^n$ by Hartogs' extension theorem. So, the assertion of Theorem \ref{with B} fails if 
$G$ is not pluripolar.

\end{enumerate}

         \medskip

 The proof of Theorem \ref{with B} also yields the following result.

            \medskip
            
\begin{theorem}   Let $\Omega \subset \mathbb C^n, n \ge 1$, be a bounded pseudoconvex  domain whose Bergman metric $g$ has its holomorphic sectional curvature identically equal to a negative constant $-c^2$.  If   for any $q \in \partial \Omega$ with 
$$
\limsup_{\Omega \ni z \to q} K(z, z) = \infty,
$$
it holds that
$$
\lim \limits_ {\Omega \ni z\to q} \Phi_{p}( z)= \infty,
$$
  for some  $p \in \Omega$, 
  then $n = 2 c^{-2}-1$ and the Bergman representative coordinate $T (z)  $  relative to $p$ is biholomorphic from $\Omega$  to a ball  possibly less a relatively closed pluripolar set.

\end{theorem}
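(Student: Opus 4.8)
The plan is to follow the architecture of the proof of Theorem \ref{with B} almost verbatim, the key observation being that the hypothesis here supplies \emph{for free} precisely what was extracted in Theorem \ref{with B} from Condition $(B)$ together with local $C^1$-connectivity: namely that the Bergman-Calabi diastasis $\Phi_p$ blows up at every boundary point where $K(z,z)$ is unbounded. First I would partition $\partial\Omega$ according to the boundary behavior of the kernel on the diagonal. For a point $q \in \partial\Omega$ with $\limsup_{\Omega \ni z \to q} K(z,z) < \infty$, the Pflug-Zwonek theorem \cite{PZ} gives that $q \in \text{int}(\overline\Omega)$ together with a neighborhood $U_q$ for which $P_q := U_q \setminus \Omega$ is pluripolar, exactly as in Step 1 of Theorem \ref{with B}; across such $q$ the kernel and the representative coordinate extend. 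For a point $w \in \partial\Omega$ with $\limsup_{\Omega \ni z \to w} K(z,z) = \infty$, the hypothesis directly asserts $\lim_{\Omega \ni z \to w}\Phi_p(z) = \infty$; this is the single place where the present argument diverges from that of Theorem \ref{with B}, and here there is nothing to prove, since the blow-up of the diastasis is postulated rather than derived.

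From this point the reasoning is identical to Step 1 of Theorem \ref{with B}. I would collect all the finite boundary points $q_j$ with their neighborhoods $U_j$ and pluripolar sets $P_j$ and form the bounded domain $\tilde\Omega := \bigcup_j U_j \cup \Omega$. Since each $P_j$ is pluripolar, hence negligible for $L^2$ holomorphic functions by Siciak \cite{S82}, an $L^2$-extension across $P_j$ shows that $\tilde\Omega$ and $\Omega$ share the same Bergman kernel and metric, so the holomorphic sectional curvature of $\tilde\Omega$ is again $-c^2$; moreover $\partial\tilde\Omega$ coincides with the non-pluripolar part of $\partial\Omega$, that is, with the set of essential boundary points $w$. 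As $\Phi_p$ blows up at each such $w$, it blows up along all of $\partial\tilde\Omega$, so Theorem \ref{biholo} yields that $\tilde\Omega$ is biholomorphic to a ball and $n = 2c^{-2}-1$. Setting $E := \bigcup_j P_j = \tilde\Omega \cap \partial\Omega$, which is relatively closed in $\tilde\Omega$ and pluripolar (a countable union of pluripolar sets, preserved under the biholomorphism), I conclude that $T$ is biholomorphic from $\Omega$ onto a ball less $E$.

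For a general $p$ not satisfying the normalization \eqref{normal}, I would reduce to the normalized case by the linear transformation used at the end of Step 1 of Theorem \ref{with B}: since both the diastasis and the holomorphic sectional curvature are biholomorphic invariants and the blow-up hypothesis is intrinsic, the argument transfers and the conjugated map recovers $T$ relative to $p$. The main---indeed the only---obstacle is bookkeeping: checking that the countable union of neighborhoods genuinely produces a domain with unchanged Bergman data and that $\partial\tilde\Omega$ is exactly the essential boundary. All of this, however, is already carried out in \cite{DWo} and in Step 1 above, so no genuinely new difficulty arises. Note finally that, unlike Theorem \ref{with B}, the present statement claims only a biholomorphism onto a ball less a pluripolar set and not a homeomorphic extension to the closures, so Step 2 of that proof is not required here.
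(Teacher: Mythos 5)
Your proposal is correct and takes essentially the same route as the paper: the paper presents this theorem as a direct byproduct of Step 1 of the proof of Theorem \ref{with B}, with the hypothesized blow-up of $\Phi_p$ at essential boundary points substituting for what Condition $(B)$ and local $C^1$-connectivity were used to derive there, exactly as you describe. The remaining ingredients in your write-up --- the Pflug--Zwonek dichotomy, the enlarged domain $\tilde\Omega$ with unchanged Bergman data, Theorem \ref{biholo}, the relatively closed pluripolar set $E$, and the omission of the homeomorphic-extension step --- all match the paper's argument.
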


\subsection{$L^2$-Domain of Holomorphy}

    \medskip

Corollary \ref{Cor} follows directly from  Theorem \ref{with B} as  the boundary of  a bounded $L^2$-domain of holomorphy,   which is the domain of existence of some $L^2$ holomorphic function, contains no pluripolar part, cf. \cite{PZ, I04}.  Moreover, it is well-known that a  bounded domain which is complete with respect to the Bergman metric  is an $L^2$-domain of holomorphy.

     \medskip

 For a bounded domain $\Omega$  in $\mathbb C^n$ with a {\it complete} Bergman metric of constant holomorphic sectional curvature, although Lu's theorem says that the Bergman representative coordinate $T$ is a   biholomorphism that  maps  $\Omega$  to $\mathcal B $, 
 our Corollary \ref{Cor} guarantees that  the boundary  $\partial \Omega$ is topologically the sphere $S^{2n-1}$
 whenever such $\Omega$ satisfies both Condition $(B)$ and  local $C^1$-connectivity around any $q \in \partial \Omega$.

    \medskip

 \begin{proof}   [{\bf Proof of Theorem \ref{under biholo}}]
 After a possibly linear change of coordinates, 
 it suffices to  assume that   at      some   $p \in U$,  \eqref{normal}   holds true. 
Then, take a small polydisc $\mathbb D^n(p; r_p)  \subset U$  for some  $ r_p >0$.
By Proposition \ref{upper bound}, we know that 
 for each $\alpha, j = 1, \dotsc, n$, 
  $$
\left |  \frac{\partial w_{\alpha} (z)}{\partial z_j}  \right |  \leq  \frac{\mathcal C}{r_p} + 2\mathcal C \cdot C_p     < \infty, \quad \forall   z\in \Omega.
 $$
 Let 
 $D_{T} $ be the determinant of the complex Jacobian $JT$, and let $Adj (JT)$ be the adjugate matrix of $JT$.
 Then, each entry of $Adj (JT)$ is bounded from above  on $\Omega $ by some finite positive constant.
  Let $S$ be the inverse map of $T $. 
By the inverse function theorem, 
the complex Jacobian   of $S$ is the inverse  of the complex Jacobian   of $T$  and satisfies \eqref{inverse}.
 Also, by  \eqref{transf},
$$
| {D_{T} (z)} |= {|K(z, p)| v( \mathcal B)} \geq  { C }  {v( \mathcal B)} , \quad \forall z \in \Omega,
$$
 where $v(\cdot)$ denotes the Euclidean volume. 
 Therefore, each component  of  $S$ is a bounded holomorphic function whose partial derivatives  are bounded  from above by some  finite constant  on the ball $\mathcal B $. We  then conclude that $S $ is Lipschitz and thus continuous up to   $\overline {\mathcal B }$. By the continuity of $S$, it follows that $S( \partial \mathcal B) = \partial \Omega$, so $\partial \Omega$ is locally connected.

 \end{proof}

     \medskip

The following is an  example of a pseudoconvex domain which is not an $L^2$-domain of holomorphy. 
It was provided to us by Peter Pflug, and we present it here  with his kind permission.

     \medskip

            \begin{example} \label{exam}
            $\mathcal D:=\{ (z, w) \in \mathbb C^2 : |z|^2+|w|^2<1,  z \neq 0\}$ is an example of a {\it pseudoconvex} domain that is not an $L^2$-domain of holomorphy. To see this, let $\mathbb B^2$ be the unit ball in $\mathbb C^2$ and let $E := \{ (0, w) \in \mathbb C^2 : |w|<1 \}$ be a relatively closed  pluripolar subset of  $ \mathbb B^2$. Then, $\mathcal D = \mathbb B^2 \setminus  E$  and $\mathcal D$ is not an $L^2$-domain of holomorphy since all $L^2$ holomorphic functions on $\mathcal D$ extend across $E$.
         Indeed, $\mathcal D$ is  pseudoconvex. For instance, the function 
 $f(z, w)=1/z$ is  holomorphic on $\mathcal D$, but $f$ cannot extend across $E$ on which it blows up.

   \end{example}

\section{Proofs of One Dimensional  Results}

In this section, we prove our one dimensional  results, 
based on the authors' previous work  \cite {DWo} as well as properties of Green's functions.
 Let $SH^{-}(X)$ denote the set of negative subharmonic functions on an open Riemann surface $X$. Given   $z_0 \in X$,  let $w$ be  a fixed local coordinate  in a neighbourhood of $z_0$ such that $w(z_0) =0$. The (negative) Green's function   is
\begin{equation} \label{Other Green's Function}
G(z, z_0) = \sup\{u(z): u \in SH^{-}(X), \limsup_{z \to z_0} u(z) - \log|w(z)| < \infty\}.
\end{equation}
 An open Riemann surface admits a Green's function if and only if there is a non-constant, negative subharmonic function defined on it.  The Green's function  is strictly negative on the surface and harmonic except on the diagonal.   The Green's function on $X$ induces the logarithmic capacity $c_{\beta}$, which is defined as
 $$
c_{\beta}(z_0) = \lim_{z \to z_0} \exp(G(z, z_0) - \log|w(z)|),
 $$
 where  $w$ is  a fixed local coordinate  in a neighborhood of $z_0 \in X$ with $w(z_0) =0$. Denote  by $K$ the Bergman kernel   on the diagonal for holomorphic 1-forms on $X$. 
Suita's conjecture (now a theorem of B\l{}ocki \cite{Bl13}, and Guan and Zhou \cite{GZ}) asserts that for any surface $X$ as above,  it holds that
\begin{equation} \label{leq}
\pi K  \geq c_{\beta}^2, 
\end{equation} 
and equality in \eqref{leq} holds true at some $z\in X$ if and only if $X$ is  {\it biholomorphic to a disc possibly less a relatively closed polar set.}
We say that a Borel set $P$ is polar if there is a subharmonic function $u \not\equiv -\infty$ defined in a neighbourhood of $P$ so that $P \subset \{z: u(z) = -\infty\}$.

     \medskip
     \medskip

  Due to an identity 
$
  (\log c_{\beta} )_{ z  \bar z}=\pi K
$
 proved by Suita in the same paper \cite{Su}, his conjecture has  geometric interpretation: 
the inequality part in \eqref{leq} is equivalent to  that the Gaussian curvature of the invariant metric $c_{\beta}^2(z)dz\otimes d\overline{z}$ always has an upper bound $-4$;
the equality/rigidity part, which says that the curvature attains $-4$ at some $z  \in X$, guarantees that the surface is necessarily as asserted with the curvature identically $-4$.  Similar curvature property is satisfied for the Carath\'eodory metric (see \cite{Su73, W}); additional results on 
metrics of constant Gaussian curvature were obtained  in \cite{D}.  
Notice that neither boundary regularity  nor completeness of  the metric  is  assumed in the Suita type problems. 
 Moreover, B\l{}ocki and Zwonek  \cite{BZ15} obtained a multidimensional version of the Suita conjecture (see also \cite{BBMV, GZa} for related comparison  results).

     \medskip

 \medskip

For a bounded domain $\Omega \subset \mathbb C $, there is an equivalent definition of the Green's functions on it, cf. \cite[Definition 4.4.1]{R95}.

     \medskip

 \begin{definition}  \label{Green def} A Green's function for $\Omega$ is a map $g_\Omega: \Omega \times \Omega \to [-\infty, \infty)$ such that for each $p\in \Omega$,
 
  \begin{enumerate}

\item [$(a)$] $g_\Omega(\cdot, p)$ is harmonic on $\Omega  \setminus \{p\}$, and bounded outside each neighbourhood of $p$;

\item [$(b)$] $g_\Omega(p, p) = -\infty$, and as $z\to p$, 
$$
g_\Omega(z, p) = \log |z-p| +\Hi (1);
$$

\item [$(c)$] $g_\Omega(z, p) \to 0$, as $z \to \zeta$ for n.e. $\zeta \in \partial \Omega$.
 
  \end{enumerate}

  \end{definition}

           \medskip
           
  Here, a property is said to hold nearly everywhere (n.e.) on a subset $S $ of $\mathbb C$ if it holds on $S \setminus E$, for some Borel polar set $E$, cf. \cite[Definition 3.2.2]{R95}.  
Recall that  for a domain $\Omega \subset \mathbb{C}$, a point $\zeta_0 \in \partial \Omega$ is said to be irregular if there is a $\zeta \in \Omega$ such that $\lim_{z \to \zeta_0} G(z, \zeta) \neq 0$ and regular otherwise.  By Kellogg's Theorem, cf. \cite[Theorem 4.2.5]{R95}, the set of irregular boundary points of a domain is a polar set.  
The following useful lemma could be found, for instance,  in  \cite{DTZ}.

     \medskip

 \begin{lem}[Rigidity lemma of the Green's function] \label{Green lem}

On an open Riemann surface $X$, the Green's function with a pole $z_0 \in X$ is
 $$
G(z, z_0)= \log|f(z)|
 $$
for some holomorphic function $f$ on $X$ if and only if $f$ is a biholomorphism from $X$ to the unit disk possibly less a relatively closed polar subset such that $f(z_0) =0$.

\end{lem}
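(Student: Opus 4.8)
The plan is to prove the two implications separately. The \emph{if} direction is short: if $f$ is a biholomorphism of $X$ onto $\mathbb D\setminus P$ with $P$ relatively closed and polar and $f(z_0)=0$, then the conformal invariance of the Green's function gives $G(z,z_0)=G_{\mathbb D\setminus P}(f(z),0)$, while the negligibility of polar sets for the extremal family in \eqref{Other Green's Function} yields $G_{\mathbb D\setminus P}(\cdot,0)=G_{\mathbb D}(\cdot,0)$ on $\mathbb D\setminus P$. Since $G_{\mathbb D}(w,0)=\log|w|$, this gives $G(z,z_0)=\log|f(z)|$.

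The substance is in the converse, which I would argue in three stages. Assume $G(z,z_0)=\log|f(z)|$ with $f$ holomorphic. First, $G<0$ forces $|f|<1$, so $f\colon X\to\mathbb D$, and the normalized singularity $G(z,z_0)=\log|w(z)|+\Hi(1)$ shows that $f$ has a simple zero at $z_0$. The heart of the proof is to promote this to injectivity by a rigidity argument. For each $a\in X$ consider
\[
u_a(z):=\log\bigl|\,m_{f(a)}\!\circ f(z)\,\bigr|,\qquad m_\eta(w)=\frac{w-\eta}{1-\overline{\eta}\,w},
\]
a negative subharmonic function on $X$ with a logarithmic pole at $a$; by the extremal definition \eqref{Other Green's Function}, $u_a\le G(\cdot,a)$. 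Evaluating at $z_0$ and combining $u_a(z_0)=\log|m_{f(a)}(0)|=\log|f(a)|$ with the symmetry $G(z_0,a)=G(a,z_0)=\log|f(a)|$, I find equality at the interior point $z_0$. The strong minimum principle applied to the nonnegative harmonic function $G(\cdot,a)-u_a$ then forces it to vanish identically, so that
\[
G(z,a)=\log\left|\frac{f(z)-f(a)}{1-\overline{f(a)}\,f(z)}\right|,\qquad z\in X.
\]
Since the left side has a single simple logarithmic pole at $a$ while the right side has a pole at every preimage of $f(a)$, matching the poles shows $f^{-1}(f(a))=\{a\}$ and $f'(a)\neq 0$ for every $a$; hence $f$ is an injective immersion, i.e.\ a biholomorphism onto $\Sigma:=f(X)\subseteq\mathbb D$.

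The final stage identifies $\Sigma$. Transporting the displayed identity through the biholomorphism $f$ gives $G_\Sigma(\zeta,\eta)=G_{\mathbb D}(\zeta,\eta)$ for all $\zeta,\eta\in\Sigma$, so deleting $P:=\mathbb D\setminus\Sigma$ leaves the Green's function unchanged. Fixing $\eta\in\Sigma$ and invoking property $(c)$ of Definition \ref{Green def} for the planar domain $\Sigma$, the function $G_\Sigma(\cdot,\eta)$ tends to $0$ at nearly every boundary point of $\Sigma$; but on $\Sigma$ it coincides with $G_{\mathbb D}(\cdot,\eta)$, which is continuous and strictly negative at every interior point of $\mathbb D$. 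Therefore no point of $P$ can be a regular boundary point of $\Sigma$, so $P$ lies in the irregular boundary set, which is polar by Kellogg's theorem. This realizes $f$ as a biholomorphism of $X$ onto $\mathbb D\setminus P$ with $P$ relatively closed and polar and $f(z_0)=0$.

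I expect the injectivity stage to be the main obstacle: the delicate point is converting the one-sided extremal bound $u_a\le G(\cdot,a)$ into an identity, where the symmetry of the Green's function and the rigidity in the strong minimum principle do the decisive work (some care is also needed with the connectedness of $X$ minus the preimages of $f(a)$ and with the possibility of higher-order poles). Once the identity for $G(\cdot,a)$ is established, injectivity, the immersion property, and the polarity of the omitted set follow in turn.
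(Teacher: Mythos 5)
The paper itself offers no proof of this lemma; it is quoted from \cite{DTZ}, so there is no internal argument to compare yours against, and your proposal has to stand on its own. Its overall route is the natural one and is essentially sound: the \emph{if} direction via conformal invariance of the Green's function plus extension of negative subharmonic functions across relatively closed polar sets; the converse via the competitor $u_a=\log|m_{f(a)}\circ f|\le G(\cdot,a)$, the symmetry $G(z_0,a)=G(a,z_0)$, and a minimum principle applied to $G(\cdot,a)-u_a$, yielding $G(z,a)=\log\bigl|\tfrac{f(z)-f(a)}{1-\overline{f(a)}f(z)}\bigr|$ and then, by matching logarithmic poles, injectivity of $f$ and $f'\neq 0$. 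Two small points of precision: first, $G(\cdot,a)-u_a$ is superharmonic, not harmonic, at the extra preimages in $f^{-1}(f(a))$, but the minimum principle for superharmonic functions on all of $X$ (after the removable-singularity extension at $a$) applies and in fact dispenses with your worry about connectedness of $X$ minus the preimage set; second, harmonicity of $G(\cdot,z_0)$ off its pole already forces $f\neq 0$ on $X\setminus\{z_0\}$, which you implicitly need when computing $u_a(z_0)=\log|f(a)|$ as a finite number.

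The one genuine gap is in the final stage. Your boundary argument proves that every point of $\partial\Sigma\cap\mathbb D$ fails property $(c)$, hence that $\partial\Sigma\cap\mathbb D$ is polar; but the set that must be shown polar is $P=\mathbb D\setminus\Sigma$, and the sentence ``no point of $P$ can be a regular boundary point of $\Sigma$, so $P$ lies in the irregular boundary set'' tacitly assumes $P\subset\partial\Sigma$. A priori $P$ could have interior points (picture $\Sigma$ an annulus); interior points of $P$ are not boundary points of $\Sigma$ at all, so Kellogg's theorem and property $(c)$ say nothing about them. The repair is short: $\partial\Sigma\cap\mathbb D$ is relatively closed in $\mathbb D$ and polar, and a relatively closed polar subset cannot disconnect a domain, so $\mathbb D\setminus(\partial\Sigma\cap\mathbb D)=\Sigma\sqcup P^{\circ}$ is connected; since $\Sigma$ is nonempty and open, this forces $P^{\circ}=\emptyset$, whence $P=P\setminus P^{\circ}=\partial\Sigma\cap\mathbb D$ is indeed polar and relatively closed. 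With that line added, your proof is complete.
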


     \medskip

 \begin{proof} [{\bf Proof of Theorem \ref{1-dim}}] Since the Bergman metric  on $\Omega$ has Gaussian curvature identically equal to $- 2$, by  \cite {DWo} we know that the Bergman representative coordinate $T (z) $ relative to $p$  is  a bounded holomorphic map on $\Omega$ with  
 $|T |^2<  {2 g^{-1} (p)}$. Consider on $\Omega$ the negative function $F(z) := \log |T(z)| + \log \sqrt {g (p)} - \log \sqrt{2}$. For $p \in \Omega$, we will verify properties (a) -- (c) in Definition  \ref{Green def}.

  \medskip
  
For (b), as $z \to p$,  $T(z) \to 0$ so $F(z) \to -\infty$. Moreover,  as $z \to p$,
\begin{align*}
F(z) - \log|z-p| &=   \log \left | \frac{T(z) - T(p)} {z-p} \right| + \log \sqrt {g (p)} - \log \sqrt{2} \\
&\to  \log |T^{\prime}(p)|   + \log \sqrt {g (p)} - \log \sqrt{2} \\
&=\log \sqrt {g (p)} - \log \sqrt{2}.
\end{align*}

 \medskip
 
For (c), take any Dirichlet  regular boundary point $\zeta \in \partial \Omega$, then it holds that 
$$
K(z, z) \to \infty,
$$
as $\Omega \ni z \to \zeta$ (see \cite[Proposition 6]{PZ}). By the definition of the  Bergman-Calabi diastasis,
we know that
$$
\Phi_{p}(z):= \log \frac{K(z, z)  K(p, p)}{ |K(z, p)|^2}  \geq  \log \frac{K(z, z)  K(p, p)}{  \mathcal C_1 ^2} \to \infty,
 $$
 as $\Omega \ni z \to \zeta$, because $  \left|       K(z, p)       \right| $ is  bounded from above by a finite constant $\mathcal C_1>0$ for any $z  \in \Omega$.
 On the other hand, the explicit formula we  obtained in  \cite {DWo} says that
\begin{equation} \label{Calabi}
\Phi_{p}(z)=  {-2} \log  \left (1 - \frac{ g(p)}{2} |T(z )|^2 \right ), \quad z \in \Omega,
\end{equation} 
 so
 $|T(z)|^2 \to {2}{ g^{-1}(p)}$ uniformly and thus $F(z) \to 0^-$ as $\Omega \ni z \to \zeta$. Since by Kellogg's Theorem,  the set of irregular boundary points of $\Omega$ is a polar set, we have verified 
 that $F(z) \to 0$ as $  z \to \zeta$ for   n.e. $\zeta \in \partial \Omega$.
 
  \medskip
 
To verify  (a), we {\bf claim} that  $T (z) \neq 0$ on $\Omega \setminus \{p\}$. In fact, if there is a point $s \in \Omega  $ with $T (s) = 0$. Then, $\Phi_{p}(s )  = 0$, which implies that
$$ \frac{K(s, s)  K(p, p)}{ |K(s, p)|^2}  =1.$$
But this is not possible unless $s=p$ in view of the Cauchy-Schwarz inequality. 
By this claim, it follows that $F (z)$ is harmonic on $\Omega \setminus \{p\}$. 

  \medskip

Therefore, we conclude that  $F (z)$  is Green's function on $\Omega$ with a pole $p$. By Lemma \ref{Green lem}, we know that $T (z) $    is biholomorphic from $\Omega$   to a disc $\mathbb D_r :=\{ \eta \in \mathbb C :    |\eta |^2   < {2 g^{-1} (p)}  \}$ possibly less a relatively closed  polar set $P$.
Moreover, the   transformation formula of the Bergman kernel under   biholomorphism yields 
\begin{equation}  \label{relation}
K(z, p)  = T^{\prime} (z) \cdot \overline{T^{\prime}(p) }\cdot K_{\mathbb D_r \setminus P}\left(T (z),   0\right) = \frac{T ^{\prime}(z) } { \pi r^2} = \frac{ g  (p)} { 2 \pi   } T ^{\prime}(z), \quad \forall z \in \Omega,
\end{equation}
where the  second equality holds  due to the explicit formula of the Bergman kernel on the disc and the fact that
 the polar set is negligible for $L^2$ holomorphic functions.

    \medskip

For Part 1,  $T^{-1}$,
the inverse map of 
$T  $ extends across the polar part $P$.
If
$
  \left|       K(z, p)       \right|
 $
is additionally bounded from below  by a finite constant $\mathcal C_2>0$ for any $z  \in \Omega$, then $T^{-1}$ will be  Lipschitz and thus  continuous up to   $\overline {\mathbb D_r} $.
  
  \medskip

For Part 2, by \eqref{relation} and the  local $C^1$-connectivity, we know that
   $
  \left|     T^\prime(z)    \right|  
 $
is  bounded from above by a finite constant $\mathcal C_1>0$ for any $z  \in \Omega$.
   Therefore, similar to the proof of Theorem \ref{with B},
      we conclude that $T$ is   continuous up to   $\overline {\Omega} $.

   \medskip
  
Part 3  then follows from both Parts 1 and 2.
  
       \end{proof}
       
                \medskip
        
     The  fact below is well-known.
        
         \begin{pro} \label{exterior arc}   
         Let        $q $ be  a boundary point of a bounded domain $\Omega $ in $\mathbb C $. If $q$  is the endpoint of a simple arc lying in the exterior of  $\Omega$, then   $q$ is Dirichlet  regular. 
       \end{pro}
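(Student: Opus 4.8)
The plan is to reduce the statement to the existence of a barrier at $q$ and then to manufacture such a barrier directly from the exterior arc. Recall that a boundary point of a planar domain is Dirichlet regular if and only if it admits a barrier, and that this is a purely local property of $\Omega$ near the point (see \cite{R95}). Thus it suffices to produce a negative subharmonic function $u$ on $\Omega \cap B(q, r)$, for some small $r > 0$, with $u(z) \to 0$ as $z \to q$ and $\sup\{u(z) : z \in \Omega,\ s \le |z - q| \le r\} < 0$ for every $0 < s < r$.

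First I would localize the arc. Writing $\gamma$ for the given simple arc with endpoint $q$ whose remaining points lie in the open exterior $\mathbb{C} \setminus \overline{\Omega}$, I would fix $r > 0$ and let $\gamma_0$ be the sub-arc of $\gamma \cap \overline{B(q, r)}$ issuing from $q$; for $r$ small this is a nondegenerate compact Jordan arc with $\gamma_0 \setminus \{q\} \subset \mathbb{C} \setminus \overline{\Omega}$. Since a Jordan arc does not separate the sphere, the set $W := \overline{\mathbb{C}} \setminus \gamma_0$ is a simply connected domain containing $\Omega \cap B(q, r)$, so by the Riemann mapping theorem there is a conformal map $\psi : W \to \mathbb{D}$. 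I would then set
$$
u(z) := \left| \psi(z) \right| - 1, \qquad z \in W,
$$
which is subharmonic (the modulus of a holomorphic function) and strictly negative on $W$. Because $\psi^{-1} : \mathbb{D} \to W$ is continuous, the map $\psi$ is proper, so $z \to q \in \partial W$ forces $|\psi(z)| \to 1$; hence $u(z) \to 0$ as $\Omega \ni z \to q$.

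The one genuine point to check — and the step I expect to be the main obstacle — is that $u$ stays bounded away from $0$ near the remaining boundary points of $\Omega \cap B(q,r)$, i.e. the second barrier condition. The naive difficulty is that $u$ vanishes along the \emph{entire} arc $\gamma_0$, not only at its tip $q$. This is resolved exactly by the hypothesis that $\gamma_0 \setminus \{q\}$ lies in the open exterior: for fixed $0 < s < r$ the set $A_s := \{z \in \Omega : s \le |z - q| \le r\}$ has compact closure disjoint from $\gamma_0$, since $\overline{A_s} \subset \overline{\Omega}$ while $\gamma_0 \cap \{|z - q| \ge s\} \subset \gamma_0 \setminus \{q\} \subset \mathbb{C} \setminus \overline{\Omega}$. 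Thus $\overline{A_s}$ is a compact subset of $W$, on which $|\psi| < 1$ attains a maximum strictly less than $1$, giving $\sup_{A_s} u < 0$. This verifies that $u$ is a barrier and hence that $q$ is regular. An alternative, slightly more potential-theoretic route would bypass the conformal map: the arc furnishes a nondegenerate continuum in $\mathbb{C} \setminus \Omega$ through $q$, which is non-thin at $q$ (each dyadic annulus about $q$ meets the continuum in a piece of comparable diameter, so the Wiener series diverges), and non-thinness of the complement at $q$ is equivalent to regularity by the criteria in \cite{R95}.
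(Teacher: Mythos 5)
Your proof is correct. Be aware, though, that the paper contains no proof of Proposition \ref{exterior arc} to compare against: it is recorded as a well-known fact, the classical justification being the continuum criterion of planar potential theory (cf. \cite{R95}) --- the arc is a connected subset of $\mathbb{C}\setminus\Omega$ containing $q$ and at least one other point, so the component of the complement through $q$ is nondegenerate and hence $q$ is regular --- which is precisely the ``alternative, potential-theoretic route'' you sketch at the end. Your main argument is a genuinely different, self-contained construction: it replaces the capacity/Wiener machinery by the Riemann mapping theorem, producing an explicit strong barrier $u=|\psi|-1$ on $W=\overline{\mathbb{C}}\setminus\gamma_0$. The steps all check out: a Jordan arc does not separate the sphere, so $W$ is a simply connected domain containing all of $\Omega$; the biholomorphism $\psi\colon W\to\mathbb{D}$ is proper, so $|\psi(z)|\to 1$ as $z\to q\in\partial W$; the hypothesis that $\gamma_0\setminus\{q\}$ lies in $\mathbb{C}\setminus\overline{\Omega}$ makes $\overline{A_s}$ a compact subset of $W$, whence $\sup_{A_s}u<0$; and the equivalence between the existence of a barrier and the Green's-function notion of regularity used in the paper (stated just before Lemma \ref{Green lem}) is indeed in \cite{R95}. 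One caveat worth noting: your strong-barrier verification uses the reading that the arc lies in the open exterior $\mathbb{C}\setminus\overline{\Omega}$ (the standard meaning of ``exterior'', and clearly the intended one); under the weaker reading $\gamma\setminus\{q\}\subset\mathbb{C}\setminus\Omega$, the set $\overline{A_s}$ could meet $\gamma_0$, but your $u$ would still be a weak barrier ($u<0$ on $\Omega$, $u\to 0$ at $q$), which suffices for regularity by Bouligand's theorem, and the continuum criterion would apply verbatim. In short, your route buys an elementary, explicit barrier at the cost of the plane-topology input that an arc does not separate $\overline{\mathbb{C}}$, while the paper's implicit route buys brevity by citing standard potential theory.
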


       We obtain the following result  for
     domains  for which the above  converse holds  true.

        \begin{theorem}     Let   $\Omega \subset \mathbb C $ be a bounded domain whose Bergman metric  has  Gaussian curvature identically equal to $- 2$. If 
       any Dirichlet  regular boundary point $q \in \partial \Omega$ is the endpoint of a simple arc $\gamma$ lying in the exterior of  $\Omega$, then 
        the Bergman representative coordinate $T (z) $ relative to any   $p \in \Omega$  is biholomorphic from $\Omega$   to a disc possibly less a relatively closed  polar set.

\end{theorem}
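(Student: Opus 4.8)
The plan is to run the same Green's-function argument as in the proof of Theorem~\ref{1-dim}, supplying the boundary condition now from the exterior arcs rather than from a global bound on $K(\cdot,p)$. By \cite{DWo}, the curvature hypothesis already gives that $T$ is a bounded holomorphic map on $\Omega$ with $|T|^2<2g^{-1}(p)$, that the explicit formula \eqref{Calabi} holds, and that $\Phi_p(z)\ge 0$ with equality only at $z=p$ (by Cauchy--Schwarz); in particular $T$ vanishes only at $p$, with a simple zero. Setting $F(z):=\log|T(z)|+\log\sqrt{g(p)}-\log\sqrt2$ exactly as in Theorem~\ref{1-dim}, properties $(a)$ and $(b)$ of Definition~\ref{Green def} follow verbatim: harmonicity on $\Omega\setminus\{p\}$ from $T\neq0$ there, and the $\log|z-p|+\Hi(1)$ singularity from the simple zero. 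Thus the whole statement reduces to verifying property $(c)$, that $F(z)\to0$ as $z\to\zeta$ for n.e.\ $\zeta\in\partial\Omega$. Once this is done, $F$ is the Green's function $g_\Omega(\cdot,p)$, and since $F=\log|f|$ with $f:=T\sqrt{g(p)/2}$ single-valued and holomorphic on $\Omega$, Lemma~\ref{Green lem} identifies $f$, hence $T$, as a biholomorphism of $\Omega$ onto the disc $\{\,|w|^2<2g^{-1}(p)\,\}$ less a relatively closed polar set.

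To verify $(c)$ I would first discard the irregular boundary points: by Kellogg's theorem they form a polar set, so it suffices to treat a Dirichlet regular $\zeta\in\partial\Omega$, at which the hypothesis furnishes a simple arc $\gamma$ in the exterior of $\Omega$ ending at $\zeta$ (note that by Proposition~\ref{exterior arc} the hypothesis is in fact a converse characterization of regularity). By \cite[Proposition~6]{PZ} regularity already gives $K(z,z)\to\infty$ as $\Omega\ni z\to\zeta$. Writing $\Phi_p(z)=\log\frac{K(z,z)}{|K(z,p)|^2}+\log K(p,p)$ and using \eqref{Calabi}, I see that $F(z)\to0$ is equivalent to $\Phi_p(z)\to\infty$, which in turn is equivalent to the decay
\[
\frac{|K(z,p)|^2}{K(z,z)}=|\hat k_z(p)|^2\longrightarrow 0,\qquad \hat k_z:=\frac{K(\cdot,z)}{\sqrt{K(z,z)}},
\]
of the normalized reproducing kernel evaluated at the fixed point $p$. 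Since $K(z,z)\to\infty$, the vectors $\hat k_z$ are unit vectors in the Bergman space, and the required decay is exactly the assertion that $\langle K(\cdot,p),\hat k_z\rangle\to0$, i.e.\ that $\hat k_z\rightharpoonup0$ weakly as $z\to\zeta$.

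The main obstacle is precisely this estimate: I must rule out a Kerzman--Forn\ae ss type blow-up of $K(\cdot,p)$ at $\zeta$ and instead extract the decay $\hat k_z(p)\to0$ from the exterior arc alone. The arc $\gamma$ has positive logarithmic capacity, so the complement of $\Omega$ is non-thin at $\zeta$; the plan is to convert this into the required decay by a potential-theoretic barrier in the spirit of the Pflug--Zwonek analysis \cite{PZ} of Bergman exhaustiveness. Concretely, I would localize near $\zeta$ by comparing $\Omega$ with the simply connected domain $\widehat{\mathbb C}\setminus\gamma$, whose harmonic measure of a small neighborhood of $\zeta$ decays controllably, and then use the reproducing property to bound $|\langle K(\cdot,p),\hat k_z\rangle|$ by a quantity governed by that harmonic measure, forcing the weak limit of the $\hat k_z$ to be $0$. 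This single boundary estimate, and not the now-routine Green's-function bookkeeping, is where the exterior-arc hypothesis does its essential work; everything else follows the template of Theorem~\ref{1-dim}, and since the argument is uniform in the base point, it holds for every $p\in\Omega$.
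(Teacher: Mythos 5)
Your reduction is sound and matches the paper's template: properties (a) and (b) of Definition \ref{Green def} carry over verbatim from the proof of Theorem \ref{1-dim}, irregular points are discarded by Kellogg's theorem, and the whole theorem indeed hinges on showing that $\Phi_p(z)\to\infty$, equivalently $|K(z,p)|^2/K(z,z)\to 0$, as $z$ tends to a regular boundary point $\zeta$ with an exterior arc. But at precisely this step your proposal stops being a proof and becomes a program. The assertion that one can ``use the reproducing property to bound $|\langle K(\cdot,p),\hat k_z\rangle|$ by a quantity governed by harmonic measure'' of $\widehat{\mathbb C}\setminus\gamma$ is not a routine estimate and no mechanism for it is given; ruling out Kerzman--Forn\ae ss type blow-up of the off-diagonal kernel relative to $\sqrt{K(z,z)}$ is exactly the hard analytic content here, and deferring it to a barrier argument ``in the spirit of Pflug--Zwonek'' leaves the theorem unproved. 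Note also that non-thinness of the complement at $\zeta$ is, by Wiener's criterion, just a restatement of Dirichlet regularity; if your plan truly needed only non-thinness, the exterior-arc hypothesis would be superfluous and you would be proving a substantially stronger theorem than the paper does, which should make you suspicious that the plan as stated cannot be completed.

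The paper closes this gap by a different and essentially soft route that avoids kernel decay estimates altogether: take the exterior arc (extended, without loss of generality, to an arc $L$ reaching $\infty$ in $\mathbb C_\infty$); then $\mathbb C_\infty\setminus L$ is simply connected, hence biholomorphic to a disc by the Riemann mapping theorem, hence Carath\'eodory complete, so the Carath\'eodory distance from $p$ to $z$ in $\mathbb C_\infty\setminus L$ blows up as $z\to q$. By the decreasing property of the Carath\'eodory metric under the inclusion $\Omega\subset\mathbb C_\infty\setminus L$, the Carath\'eodory distance on $\Omega$ dominates it and blows up as well; finally, a result of Burbea and Look says that the Bergman--Calabi diastasis $\Phi_p(z)$ tends to $\infty$ whenever the Carath\'eodory distance from $p$ to $z$ does. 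Combined with the explicit formula \eqref{Calabi}, this gives $F(z)\to 0^-$ at $q$, and the rest is the Green's-function bookkeeping you already have. If you want to salvage your approach, the missing ingredient you should look for is precisely a substitute for the Burbea--Look comparison between the diastasis (or the invariant distances) and a quantity you can actually control from the exterior arc; harmonic measure alone does not obviously couple to the Bergman inner product.
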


 \begin{proof} 
      Similar to the  proof of Theorem \ref{1-dim}, we only need to verify Condition (c) at each regular boundary point $q$. By assumption, $q$ is the endpoint of a simple arc lying in the exterior of  $\Omega$.  
       Without loss of generosity, we may assume that the simple arc is a straight line  $L$ extending to infinity on the extended complex plane $\mathbb C_\infty = \mathbb C \cup \{\infty\} $.         By the Riemann mapping theorem, $\mathbb C_\infty \setminus L$ is biholomorphic to a disc as it is simply connected. 
      Since any biholomorphism  is an isometry  with respect to the  Carath\'eodory metric, it follows that $\mathbb C_\infty \setminus L$ is Carath\'eodory complete and  the Carath\'eodory distance of
      $z, p \in \mathbb C \setminus \gamma$  tends to $\infty $ as $z \to  q \in \partial (\mathbb C_\infty \setminus L)$. 
      
   \medskip
      
      By the decreasing property, 
      since $\Omega \subset   ( \mathbb C_\infty \setminus L)$,     the Carath\'eodory distance on $\Omega$ 
      dominates that on $\mathbb C_\infty \setminus L$. Consequently,  the Carath\'eodory distance of $z, p \in \Omega$  tends to $\infty $ as $z \to q \in \partial {\Omega}$. 
  On the other hand,  as $z \to q$,
  the  Bergman-Calabi diastasis $\Phi_{p}(z)$ tends to $\infty $
  whenever the Carath\'eodory distance of   $z, p \in \Omega$   tends to $\infty $ (see \cite{Bur, Look}).
  Therefore, due to the explicit formula \eqref{Calabi},  it holds that 
 $|T(z)|^2 \to {2}{ g^{-1}(p)}$ uniformly and thus $F(z) \to 0^-$ as $\Omega \ni z \to q$. The rest of the proof is the same as that of  Theorem \ref{1-dim}.

 \end{proof}

   A domain for which the  converse in Proposition  \ref{exterior arc}    fails to hold can be found in 
 Example \ref{counter}.

     \medskip
     \medskip

       Our Theorem \ref{1-dim} directly yields the following corollary, since   the boundary of a bounded $L^2$-domain of holomorphy  contains no pluripolar part, cf. \cite{PZ, I04}.

     \medskip

\begin{cor} \label{cor 1-dim} Let   $\Omega \subset \mathbb C $ be a bounded $L^2$-domain of holomorphy whose Bergman metric $g$ has its  Gaussian curvature identically equal to $- 2$.
If there exists some point   $p \in \Omega$ such that 
     $
  \left|       K(z, p)       \right|  
 $
is  bounded from above by a finite constant $\mathcal C_1>0$ for any $z  \in \Omega$, then the Bergman representative coordinate $T (z) $ relative to $p$  is biholomorphic from $\Omega$   to a disc.
Moreover, 

 \begin{enumerate}

\item [$1.$]    if
 $
  \left|       K(z, p)       \right|  
 $
is  bounded from below  by a finite constant $\mathcal C_2>0$ for any $z  \in \Omega$, then $T  $ has a biholomorphic inverse map that extends continuously up to the closure of  the disc;  in particular,  
 $\partial \Omega$ is locally connected;

\item [$2.$]   

if     $\Omega$ satisfies 
  local $C^1$-connectivity around any boundary point $q  \in \partial \Omega$,  then $T  $ extends   continuously up to   $\overline\Omega $;

\item [$3.$]   

if both Part $1$ and Part  $2$ hold,  then $T  $ extends   to a homeomorphism of the closures;
in particular,  
 $\partial \Omega$ is a Jordan curve.

 \end{enumerate}

\end{cor}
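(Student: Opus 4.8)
The plan is to derive this corollary from Theorem \ref{1-dim} in two stages: first upgrade the conclusion ``a disc possibly less a relatively closed polar set'' to ``a disc'' by using that $\Omega$ is an $L^2$-domain of holomorphy, and then read off Parts 1--3 together with the claimed topological consequences. Since the hypotheses of the corollary (Gaussian curvature $-2$ and $|K(\cdot,p)|$ bounded above) are exactly those of Theorem \ref{1-dim}, that theorem already supplies a biholomorphism $T:\Omega\to\mathbb D_r\setminus P$ onto a disc $\mathbb D_r$ less a relatively closed polar set $P$, where $\mathbb D_r=\{\eta:|\eta|^2<2g^{-1}(p)\}$. Everything then reduces to showing $P=\emptyset$.

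To do this I would let $S:=T^{-1}$ be the inverse biholomorphism on $\mathbb D_r\setminus P$. As $\Omega$ is bounded, $S$ is a bounded holomorphic map, and a relatively closed polar set is removable for bounded holomorphic functions; hence $S$ extends to a holomorphic map $\tilde S$ on all of $\mathbb D_r$. A standard open-mapping argument shows that a holomorphic map injective off a polar (in particular, interior-free) set is globally injective, so $\tilde S$ is a biholomorphism of $\mathbb D_r$ onto an open set $\Omega^*\supseteq\Omega$ with $\Omega^*=\Omega\cup\tilde S(P)$. One checks that no point of $\tilde S(P)$ can lie in $\Omega$ (otherwise injectivity of $\tilde S$ would be violated), so $\tilde S(P)\subset\partial\Omega$; moreover, transporting a subharmonic potential of $P$ through the biholomorphism $\tilde S^{-1}$ exhibits $\tilde S(P)$ as the $-\infty$ locus of a subharmonic function on $\Omega^*$, hence polar. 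Thus $\tilde S(P)$ is a relatively closed polar subset of the open set $\Omega^*$, contained in $\partial\Omega$, across which every $L^2$ holomorphic function on $\Omega=\Omega^*\setminus\tilde S(P)$ extends (polar sets being negligible for $L^2$ holomorphic functions). If $P\neq\emptyset$, this contradicts $\Omega$ being an $L^2$-domain of holomorphy, whose boundary contains no polar part; hence $P=\emptyset$ and $T$ is biholomorphic from $\Omega$ onto the full disc $\mathbb D_r$. I expect this verification that $P=\emptyset$ --- in particular the injectivity of the extension $\tilde S$ and the identification of $\tilde S(P)$ as a polar piece of $\partial\Omega$ --- to be the main obstacle; once it is settled, the rest is a translation of Theorem \ref{1-dim}.

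Finally, with $P=\emptyset$ the three refinements follow from the corresponding parts of Theorem \ref{1-dim} together with elementary topology. For Part 1, the lower bound on $|K(\cdot,p)|$ gives, by Theorem \ref{1-dim}(1), a continuous extension $S:\overline{\mathbb D_r}\to\overline\Omega$ of the inverse; since $S(\mathbb D_r)=\Omega$ and $S$ is injective on $\mathbb D_r$, a short argument shows $S(\partial\mathbb D_r)=\partial\Omega$, so $\partial\Omega$ is a continuous image of the circle, hence a locally connected (Peano) continuum. For Part 2, I would note that for an $L^2$-domain of holomorphy no boundary point can lie in $\mathrm{int}(\overline\Omega)$ (such a point would produce a polar piece of $\partial\Omega$, by the Pflug--Zwonek description), so the corollary's hypothesis of local $C^1$-connectivity around every $q\in\partial\Omega$ is exactly the hypothesis of Theorem \ref{1-dim}(2), which yields continuity of $T$ up to $\overline\Omega$. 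Part 3 then combines Parts 1 and 2: Theorem \ref{1-dim}(3) provides a homeomorphism $\tilde T:\overline\Omega\to\overline{\mathbb D_r}$, whose restriction carries $\partial\Omega$ homeomorphically onto $\partial\mathbb D_r\cong S^1$, so $\partial\Omega$ is a Jordan curve.
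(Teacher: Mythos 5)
Your proposal is correct and is, in substance, the paper's own proof: the paper dispatches this corollary in one sentence, deriving it from Theorem \ref{1-dim} together with the fact (Pflug--Zwonek, Irgens) that the boundary of a bounded $L^2$-domain of holomorphy contains no pluripolar part. Your argument that $P=\emptyset$ --- extend $S=T^{-1}$ across the relatively closed polar set $P$, check injectivity of the extension $\tilde S$, and exhibit $\tilde S(P)\subset\partial\Omega$ as a relatively closed polar piece of the larger open set $\Omega^*$ across which every $L^2$ holomorphic function extends, contradicting the domain-of-existence property --- is precisely the unpacking of that sentence, and your reading of Parts 1--3 from the corresponding parts of Theorem \ref{1-dim} (including the Peano-continuum argument for local connectedness and the boundary-restriction argument for the Jordan curve) matches what the paper does in its neighboring proofs.

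One side remark in your treatment of Part 2 is false, though harmlessly so. You claim that no boundary point of a bounded $L^2$-domain of holomorphy can lie in $\mathrm{int}(\overline\Omega)$. The slit disc $\mathbb D\setminus[0,1)$ is a counterexample: it is a bounded $L^2$-domain of holomorphy (compose a Blaschke product whose zeros accumulate at every point of the unit circle with the inverse Riemann map; the result is bounded, hence $L^2$, and its zeros accumulate at every prime end, so it extends across no boundary point), yet every point of the open slit lies in $\mathrm{int}(\overline\Omega)=\mathbb D$. The Pflug--Zwonek fact forbids only \emph{polar} boundary pieces, and a segment is not polar. Fortunately the claim is not needed: the corollary assumes local $C^1$-connectivity around \emph{every} $q\in\partial\Omega$, which trivially implies the hypothesis of Theorem \ref{1-dim}, Part 2 (connectivity around the possibly smaller set of boundary points $q\notin\mathrm{int}(\overline\Omega)$); the two hypotheses are comparable, not ``exactly'' equal, and the stronger one suffices, so Part 2 goes through unchanged.
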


     \medskip

 Furthermore, our  Corollary \ref{simply}  gives sufficient conditions for the extension of the Riemann map to the closure  of a bounded, simply-connected planar domains $D$  in terms of its Bergman kernel.
Recall that  $D$  is biholomorphic to a disc by the  Riemann mapping theorem, cf. \cite{GKim}.
In fact, the Bergman representative coordinate $T (z)  $  relative to $p$  is   biholomorphic from $D$   to a disc $\mathbb D_r:= \{ w\in \mathbb C :     |w|^2    < 2g^{-1}(p) \}$  such that $T (p)=0$ and $T^{\prime}(p)=1$, where $g$ is the Bergman metric of $D$.  (See, for instance, \cite[Chap. VI]{Ber70} or \cite{Lu}.)

           \medskip

        \begin{proof} [{\bf Proof of Corollary \ref{simply}}]

By \eqref{relation}, we  know
\begin{equation}  \label{relation!}
K(z, p)    = \frac{ g  (p)} { 2 \pi   } T ^{\prime}(z), \quad \forall z \in D.
\end{equation} 
 
For Part 1, if $
  \left|       K(z, p)       \right|
 $
is  bounded from above by a finite constant $ \mathcal C_1>0$ for any $z  \in D$, then $T  $ will have its derivative bounded from above. 
Similar to the proof of Theorem \ref{with B}, the  local $C^1$-connectivity around any $q  \in \partial D$   will imply  that $T $ is   continuous up to   $\overline { D} $.
 
   \medskip

For Part 2, denote by $\tau: \mathbb D_r \to D $ the inverse biholomorphic map of $T$. 
If
$
  \left|       K(z, p)       \right|
 $
is  bounded from below  by a finite constant $ \mathcal  C_2>0$ for any $z  \in D$, then $\tau$ will have its derivative bounded from above. 
By  the inverse function theorem and \eqref{relation!},
$|\tau ^{\prime} (\xi )|$
is  bounded from above by the finite constant $ { g   (p)}  (2 \pi \mathcal C_2)^{-1} >0 $ for all $\xi  \in \mathbb D_r$. 
Therefore, we conclude that $\tau $ is  Lipschitz and thus  continuous up to   $\overline {\mathbb D_r} $.
Moreover, $\tau (\partial \mathbb D_r) \subset \partial D$. 
Since $\tau(\overline{\mathbb D_r})$ is closed, 
 $\tau$ maps $\overline {\mathbb D_r} $  onto $\overline {D} $ and  thus $\tau (\partial \mathbb D_r) = \partial D$, which  shows that $\partial D$ is     locally connected. 

  \medskip

 Part 3   follows from both Parts 1 and 2.

      \end{proof}

     \medskip

\noindent{}{\bf Remarks.}   
       
         \medskip
         
\begin{enumerate}[label=(\roman*)]    

\item The condition in Part 1 of Corollary \ref{simply} is  only {\it sufficient} but not necessary for the continuous extension to the closure. This can be seen from  Kerzman's example  in \cite {Ker}, which was given as 
\begin{equation} \label{Kerzman's example}
\mathcal D:= \{z \in \mathbb C: z=r e^{i\theta}, 0<r<1, \frac{\pi}{2} < \theta < 2 \pi\}.
\end{equation}  
   Here, the boundary  $\partial  \mathcal D$ is a Jordan curve  but  
 $
  \left|       K(z, p)       \right| 
 $
is  not  bounded from above by a finite constant  for all $z  \in \mathcal  D$.  

  \medskip

\item    {\bf  Condition $(B)$ is not a biholomorphic invariant}. First, it is  satisfied for the unit disc $\mathbb D$, whose Bergman kernel  is written as
$$
K(z, p) = \frac{1}{\pi}  \frac{1}{(1- z \bar p)^2} \quad \text{and} \quad \frac{\partial }{\partial z}K(z, p) =    \frac{ 2 \bar p}{\pi (1- z \bar p)^3},
$$
for an open set $U:= \mathbb D_{\frac{1}{2}}$, which is a disc around the origin  with radius $\frac{1}{2}$.
To see this, notice that for any $z \in \mathbb D$ and any $p\in U$, it follows that 
$$
\left|\frac{\partial }{\partial z}K(z, p) \right|    \leq     \frac{1}{\pi  \left| 1- z \bar p \right|^3}  =    |K(z, p)|   \frac{1 }{|1- z \bar p|}  <  |K(z, p)|  \frac{1 }{1-  |p|}  < 2 |K(z, p)|.
$$
However, Condition $(B)$ is {\it not} satisfied  for the examples of Forn\ae ss  \cite{For} and   Kerzman \cite {Ker}, where  $|K(z,  p)|$, for any fixed $p$, is unbounded as $z$ approaches  certain boundary point;
this is because 
  $ \mathcal  D$ in \eqref{Kerzman's example} satisfies
 local $C^1$-connectivity around any $q \in \partial  \mathcal  D$, and if Condition $(B)$  is satisfied we  would then conclude \eqref{upper bound K}, which is a contradiction.
 One can also check   that in Kerzman's example the determinant of the Jacobian of a Bergman representative coordinate is unbounded (see \cite{DWo}).

\end{enumerate}

Although a bounded, simply-connected  domain $D$ in $\mathbb C$ is a Lu Qi-Keng domain, namely for any  $p \in D$, the Bergman kernel $K(\cdot , p)$  has no zero set,  we have the following observation.

\medskip

 \begin{pro} \label{simply counter} 
     Let $D \subset \mathbb C$ be a bounded, simply-connected  domain whose boundary   is  not  locally connected. Then,   for any $p\in D$,  neither $  \left|       K(z, p)       \right|  
     $ nor $|T^{\prime} (z)|$ 
is   bounded from below by a finite constant $ C>0$ for all $z  \in D$, i.e., 
\begin{equation} \label{tend to zero}
\inf_{z \in D} |K(z, p)|  = 0 = \inf_{z \in D} |T^{\prime} (z)|.
 \end{equation}

        \end{pro}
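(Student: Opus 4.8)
The plan is to argue by contraposition and reduce the whole statement to Part 2 of Corollary \ref{simply}. First I would record the elementary algebraic link between the two quantities in \eqref{tend to zero}. By \eqref{relation!} one has $K(z,p) = \frac{g(p)}{2\pi} T^{\prime}(z)$ for every $z \in D$, where $g(p) > 0$. Passing to absolute values gives $|K(z,p)| = \frac{g(p)}{2\pi}\,|T^{\prime}(z)|$, so that $\inf_{z \in D}|K(z,p)|$ and $\inf_{z \in D}|T^{\prime}(z)|$ differ only by the positive multiplicative constant $\frac{g(p)}{2\pi}$ and therefore vanish simultaneously. It consequently suffices to prove that $\inf_{z \in D}|K(z,p)| = 0$ for every $p \in D$.

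To establish this I would suppose the contrary, that $\inf_{z\in D}|K(z,p)| = C > 0$ for some $p \in D$, and then derive a contradiction. Under this assumption $|K(\cdot,p)|$ is bounded from below by the finite constant $C > 0$, which is precisely the hypothesis of Part 2 of Corollary \ref{simply} (with $\mathcal{C}_2 = C$). That result then forces the inverse map $\tau := T^{-1}\colon \mathbb{D}_r \to D$ to be Lipschitz and to extend continuously up to $\overline{\mathbb{D}_r}$, and from $\tau(\partial\mathbb{D}_r) = \partial D$ it concludes that $\partial D$ is locally connected. This directly contradicts the standing hypothesis that $\partial D$ is not locally connected. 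Hence $\inf_{z\in D}|K(z,p)| = 0$, and by the first paragraph $\inf_{z\in D}|T^{\prime}(z)| = 0$ as well, which is \eqref{tend to zero}.

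At the level of this proposition there is essentially no obstacle to overcome: the only nontrivial content is already packaged inside Corollary \ref{simply}, namely the passage from a positive lower bound on $|K(\cdot,p)|$ to the continuous boundary extension of the Riemann map $\tau$, and from that extension to the local connectedness of $\partial D$. The last implication is the classical half of Carath\'eodory's theorem recalled in the Introduction (continuous extension of the Riemann map implies that $\partial D = \tau(\partial \mathbb{D}_r)$ is a continuous image of a circle, hence locally connected). So the proof I would write is simply the clean contrapositive of Part 2 of Corollary \ref{simply}, combined with the observation via \eqref{relation!} that $|K(\cdot,p)|$ and $|T^{\prime}|$ are proportional and thus share the same vanishing infimum.
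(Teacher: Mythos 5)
Your proposal is correct and follows essentially the same route as the paper's own proof: both argue by contradiction, invoking Part 2 of Corollary \ref{simply} to conclude that a positive lower bound on $|K(\cdot,p)|$ would force $\partial D$ to be locally connected, and both use the proportionality $K(z,p)=\frac{g(p)}{2\pi}T^{\prime}(z)$ to transfer the conclusion to $|T^{\prime}|$. Your write-up merely makes explicit the two small steps (the simultaneous vanishing of the infima, and the Carath\'eodory-type mechanism inside the Corollary) that the paper leaves implicit.
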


        \begin{proof} 
  For the first part, assume the contrary that for some $p\in D$,
  $$
\inf_{z \in D} |K(z, p)|  > 0.
 $$
  Then, by  Part 2 of Corollary \ref{simply}, $\partial D$ is necessary locally connected, which is a contradiction. The same thing happens to $T^{\prime} (z)$
  in view of \eqref{relation}, and we have completed the proof.

      \end{proof}

\medskip

        \begin{example}  \label{counter}
An example of a  bounded,   simply-connected  planar domain   $D$ whose boundary  is  not    locally connected  is given  in  the  following Figure \ref{Figure 1}.  In this example,  the boundary $\partial D$ consists of the comb space, which is not locally connected.
 Also, $\partial D$ is not a  continuous path;
 here,  the boundary $\partial D$ being  a continuous path means that there exists a continuous function $\gamma: [0, 1] \to \mathbb C$ such that $\gamma ([0, 1]) = \partial D$ and $\gamma (0) = \gamma (1)$, cf. \cite[Chapter 14]{Conway}. 
 Consequently, 
 for any $p\in D$,  we know by Proposition \ref{simply counter} that \eqref{tend to zero} must holds true.

\end{example} 

 \medskip

 {\center
 
\tikzstyle{every node}=[circle, draw, fill=black!50,
                    inner sep=0pt, minimum width=4pt]
\begin{tikzpicture}[thick,scale=0.9]  
  \draw[very thick] (0,0)--(7,0)--(7, -5)--(0, -5)--cycle;
\draw[very thick] \foreach \x in {0, 0.2, ..., 4.5}
 {
  (0+  \x^3/16, -5)   -- (0+\x^3/16, -2) 
 };

\end{tikzpicture}

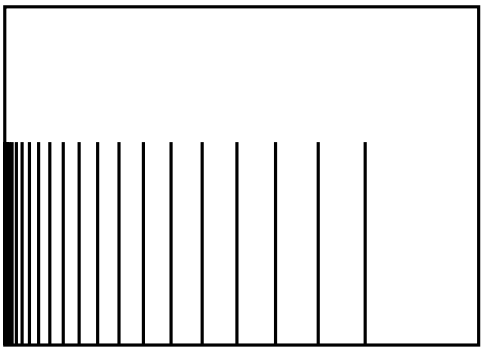
\captionof{figure}{A simply-connected domain with discontinuous boundary}  
 \label{Figure 1}
 
}

\bigskip
 
\bigskip

Lastly, we prove  the following estimate for planar domains whose Bergman metric  has constant negative Gaussian curvature.

\medskip

  \begin{pro} \label{1-dim lemma} 
     Let $\Omega \subset \mathbb C$ be a domain whose Bergman metric $
 g (z)dz\otimes d\overline{z}:= (\log K (z, z))_{ z\overline{z}}dz\otimes d\overline{z}
 $  has  Gaussian curvature identically equal to $-2$. 
  Then, for any $p \in \Omega$, the Bergman representative coordinate $T (z) $ relative to $p$ satisfies
$$
 |{T^\prime (z)}|  \leq  \frac{2 \pi }  { g (p)    } |K(z, p)| ,  \quad \forall z \in  \Omega.
$$
Moreover, equality holds on $\Omega$ if 
 $
  \left|       K(z, p)       \right|  
 $
is  bounded from above  by a finite positive constant for all $z  \in \Omega$.

  \end{pro}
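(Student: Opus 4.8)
The plan is to reduce the asserted inequality to a pointwise comparison between the Bergman metric coefficient $g$ and the Bergman kernel on the diagonal $K(z,z)$, and then to settle that comparison using the extremal characterization of the Green's function together with Suita's inequality \eqref{leq}.

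First I would record two exact identities valid on all of $\Omega$ under the curvature hypothesis. Since $\Phi_p$ is a K\"ahler potential, $g=\partial_z\partial_{\bar z}\Phi_p$, differentiating the explicit formula \eqref{Calabi} gives $g(z)=g(p)\,|T'(z)|^2\big(1-\tfrac{g(p)}{2}|T(z)|^2\big)^{-2}$, while exponentiating \eqref{Calabi} and recalling $\Phi_p(z)=\log\frac{K(z,z)K(p,p)}{|K(z,p)|^2}$ gives $\big(1-\tfrac{g(p)}{2}|T(z)|^2\big)^2=\frac{|K(z,p)|^2}{K(z,z)K(p,p)}$. Combining the two yields the single clean relation $|T'(z)|^2=\frac{g(z)}{g(p)\,K(z,z)\,K(p,p)}\,|K(z,p)|^2$, so the claimed bound $|T'(z)|\le \frac{2\pi}{g(p)}|K(z,p)|$ is equivalent to $g(z)\,g(p)\le 4\pi^2 K(z,z)K(p,p)$. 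Thus it suffices to prove the pointwise inequality $g(\zeta)\le 2\pi K(\zeta,\zeta)$ for every $\zeta\in\Omega$ and then apply it at $\zeta=z$ and at $\zeta=p$.

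For the pointwise inequality I would fix $\zeta\in\Omega$ and use the representative coordinate $T_\zeta$ relative to $\zeta$; by the facts recalled from \cite{DWo} it maps $\Omega$ into the disc of radius $r_\zeta:=\sqrt{2/g(\zeta)}$ and vanishes only at $\zeta$, there a simple zero with $|T_\zeta'(\zeta)|=1$. Hence $F_\zeta(w):=\log|T_\zeta(w)|-\log r_\zeta$ is a negative subharmonic function on $\Omega$, harmonic off $\zeta$, with $F_\zeta(w)-\log|w-\zeta|\to -\log r_\zeta$ as $w\to\zeta$. By the extremal definition \eqref{Other Green's Function}, $F_\zeta(\cdot)\le G(\cdot,\zeta)$; subtracting $\log|w-\zeta|$ and letting $w\to\zeta$ gives $-\log r_\zeta\le \log c_\beta(\zeta)$, i.e. $c_\beta(\zeta)^2\ge 1/r_\zeta^2=g(\zeta)/2$. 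Since on a planar domain the $1$-form Bergman kernel agrees on the diagonal with $K(\zeta,\zeta)$, Suita's inequality \eqref{leq} reads $\pi K(\zeta,\zeta)\ge c_\beta(\zeta)^2$, whence $g(\zeta)\le 2c_\beta(\zeta)^2\le 2\pi K(\zeta,\zeta)$, as required.

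For the equality assertion I would invoke Theorem \ref{1-dim}: when $|K(z,p)|$ is bounded above, $T$ is a biholomorphism of $\Omega$ onto a disc less a relatively closed polar set, and the transformation rule \eqref{relation} gives $K(z,p)=\frac{g(p)}{2\pi}T'(z)$ identically, which is the equality case. The main obstacle is the pointwise estimate $g\le 2\pi K$: the bound $g\le 2c_\beta^2$ is not a general fact but hinges on $F_\zeta$ being an admissible competitor for the Green's function, which in turn uses that $T_\zeta$ has a single simple zero with unit-modulus derivative (a consequence of the constant-curvature hypothesis through \eqref{Calabi}), while the passage from $c_\beta^2$ to $K$ rests on Suita's theorem. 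The delicate point to handle carefully is the comparison of the leading coefficients at the pole, which converts the global domination $F_\zeta\le G(\cdot,\zeta)$ into the numerical inequality $c_\beta(\zeta)^2\ge g(\zeta)/2$.
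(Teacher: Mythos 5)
Your proof is correct, and its skeleton coincides with the paper's: you derive the same identity
$\frac{|K(z,p)|^2}{K(z,z)K(p,p)} = \frac{g(p)\,|T'(z)|^2}{g(z)}$
from the diastasis formula \eqref{Calabi} (this is precisely the paper's \eqref{longineq}), you reduce the claim to the pointwise bound $g(\zeta)\le 2\pi K(\zeta,\zeta)$ applied at $\zeta=z$ and $\zeta=p$, and you settle the equality case exactly as the paper does, via Theorem \ref{1-dim} and the transformation rule \eqref{relation}. Where you genuinely diverge is in how that pointwise bound is obtained. The paper simply quotes from \cite{D} the chain $\pi K(w,w)\ge c_B^2(w)\ge g(w)/2$, where $c_B$ is the analytic capacity (Carath\'eodory metric) and the first inequality is Suita's elementary 1972 result. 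You instead prove a lower bound for the \emph{logarithmic} capacity, $c_\beta^2(\zeta)\ge g(\zeta)/2$, by exhibiting $\log|T_\zeta/r_\zeta|$ as a competitor in the extremal definition \eqref{Other Green's Function} of the Green's function and comparing leading coefficients at the pole; you must then invoke the full Suita conjecture theorem $\pi K\ge c_\beta^2$ of B\l{}ocki and Guan--Zhou, a deep $L^2$-extension result. Your route buys self-containedness: it exposes exactly how the constant-curvature hypothesis (through the properties of $T_\zeta$ --- image in the disc of radius $r_\zeta$, unique simple zero at $\zeta$, $|T_\zeta'(\zeta)|=1$) produces the capacity--metric comparison, rather than citing \cite{D}. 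But it is heavier than necessary: your own competitor $h:=T_\zeta/r_\zeta$ maps $\Omega$ into the unit disc with $h(\zeta)=0$ and $|h'(\zeta)|=1/r_\zeta$, so by the very definition of the analytic capacity one gets $c_B(\zeta)\ge 1/r_\zeta=\sqrt{g(\zeta)/2}$ with no Green's function argument at all, and then the elementary inequality $\pi K\ge c_B^2$ finishes the proof without appealing to the resolution of Suita's conjecture. In short: same reduction and same equality case as the paper, a different (correct, but avoidably deep) proof of the capacity step.
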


      \begin{proof}

  By the discussion in Section \ref {2.1} (cf.  \cite{DWo, D}), the zero set   $A_{p}=\emptyset$ and   the  explicit formula \eqref{formula} of the Bergman-Calabi diastasis relative to $p$  is given by
 $$
\Phi_{p}( z) =   -2  \log      \left (1 - \frac{g (p) }{2}   |T(z)|^2     \right ), \quad \forall z \in  \Omega,
$$ 
which yields 
  $$
g(z)  =  | T^{\prime}(z)|^2 g (p) \left (1 - \frac{g (p) }{2}    |T(z)|^2     \right )^{-2}, \quad \forall  z \in  \Omega.
 $$ 
Therefore, by the above two formulas and \eqref{dia}, it holds that
 \begin{equation} \label{longineq}
 \frac{|K(z, p)|^2} {K(z, z)K(p, p)}  = e^{ -\Phi_p} (z)  = \left(1 - \frac{g (p) }{2}   |T (z)|^2   \right)^{2}   = \frac{ |{T^\prime (z)}|^2   g (p)}{ g (z)},  \quad \forall  z \in  \Omega.
\end{equation}
From \cite{D}, we know that for any $w\in \Omega$, 
 \begin{equation} \label{ineq}
\pi  K(w, w) \geq c_B^2(w) \geq      \frac{g(w)}{2},
\end{equation}
where $c_B$ is the analytic capacity (Carath\'eodory metric) defined as 
$$
 c_B(w):=\sup \left \{ | h^{\prime}(w) | \, :  \,  h\text{ is holomorphic on $\Omega$ with $h({w})= 0$ and} \, \, |h|\leq1 \right\}.
 $$
 Notice that the first inequality of \eqref{ineq} was proved by  Suita \cite{Su}.
Therefore,  by \eqref{longineq} and  \eqref{ineq}, we get
$$
 { |K(z, p)| }   = \sqrt{ \frac{   g (p) }{g (z) }  K(z, z)K(p, p)   }  |{T^\prime (z)}| \geq \frac{   g (p)    }{2 \pi  } |{T^\prime (z)}| ,  \quad \forall  z \in  \Omega.
$$
 
 By Theorem \ref{1-dim}, we know that if 
 $
  \left|       K(z, p)       \right|  
 $
is  bounded from above  by a finite positive constant for any $z  \in \Omega$, then $T$ becomes a biholomorphism and thus equality holds on $\Omega$ due to the transformation formula of the Bergman kernel. 

       \end{proof}

      Some  rigidity theorems related to capacities were given in \cite{DT, DTZ}.

\section{Bounded  Bergman Representative Coordinates}
In this section, we  study the boundedness of the 
Bergman representative coordinate. 
In fact, if there exist a point $p $  and its neighborhood $ U_p $  such that  \eqref{2 similar} holds true, then  the Bergman representative coordinate  relative to $p$ becomes  a bounded holomorphic map.  More precisely, we  get

\medskip
 
 \begin{pro}  \label{Bergman bounded}
 
 Let $\Omega$ be a   domain  admitting the Bergman metric in $  \mathbb C^n, n\geq 1$,  and let $K(z, \zeta)$ denote the Bergman kernel of $\Omega$. Assume that there exists a point $p \in \Omega$ 
 such that 
\begin{equation}  \label{similar}
 \sup_{\zeta \in U } \left|   K(z, \zeta)\right|  \le \mathcal C  \left| K(z, p)\right|, \quad  \forall   z \in \Omega, 
\end{equation}
 where  $U $  is a neighborhood of $p$ and 
 $\mathcal C >0$ is a finite constant.
  Then,  
the Bergman representative coordinate $T (z) $ relative to  $p$ maps $\Omega$ holomorphically to a bounded domain in $ \mathbb C^n$.  
 
\end{pro}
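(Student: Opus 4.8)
The plan is to estimate directly the two building blocks appearing in the defining formula \eqref{rep} and to show that each component $w_\alpha(z)$ stays uniformly bounded on $\Omega$. The only $z$-dependent term in \eqref{rep} is $K^{-1}(z,p)\,\frac{\partial}{\partial\overline{t_j}}\big|_{t=p}K(z,t)$; the summand $\frac{\partial}{\partial\overline{t_j}}\big|_{t=p}\log K(t,t)$ and the matrix entries $g^{\bar j\alpha}(p)$ are constants depending on $p$ alone. Hence it suffices to produce, for each $j$, a finite bound on $\big|K^{-1}(z,p)\,\frac{\partial}{\partial\overline{t_j}}\big|_{t=p}K(z,t)\big|$ valid for all $z\in\Omega$.

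First I would observe that, for fixed $z$, the map $t\mapsto K(z,t)$ is anti-holomorphic, so that $u_j:=\overline{t_j}$ is an honest holomorphic variable for $K(z,t)$ once the remaining coordinates of $t$ are frozen at $p$. Choosing $r>0$ so small that the closed polydisc $\overline{\mathbb D^n(p;r)}\subset U$, Cauchy's integral formula for the first derivative gives
\[
\left|\frac{\partial}{\partial\overline{t_j}}\Big|_{t=p}K(z,t)\right|\le\frac1r\max_{|u_j-\overline{p_j}|=r}\left|K(z,t(u))\right|\le\frac1r\sup_{\zeta\in U}|K(z,\zeta)|,
\]
since the points $t(u)$ traced out by $|u_j-\overline{p_j}|=r$ (namely $t_j=p_j+re^{-i\theta}$, with the other coordinates equal to $p$) all lie in $U$. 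Invoking the hypothesis \eqref{similar} then yields the crucial estimate
\[
\left|\frac{\partial}{\partial\overline{t_j}}\Big|_{t=p}K(z,t)\right|\le\frac{\mathcal C}{r}\,|K(z,p)|,\qquad\forall z\in\Omega.
\]

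Before dividing by $K(z,p)$ I would record that \eqref{similar} forces $K(\cdot,p)$ to be zero-free on $\Omega$: if $K(z_0,p)=0$ then $\sup_{\zeta\in U}|K(z_0,\zeta)|=0$, so $\zeta\mapsto K(z_0,\zeta)$ vanishes on the open set $U$; its conjugate $K(\zeta,z_0)$ is holomorphic in $\zeta$, whence by the identity theorem it vanishes on the whole connected domain $\Omega$, contradicting $K(z_0,z_0)>0$. Consequently $T$ is well defined and holomorphic throughout $\Omega$, and dividing the displayed estimate by $|K(z,p)|$ gives $\big|K^{-1}(z,p)\,\frac{\partial}{\partial\overline{t_j}}\big|_{t=p}K(z,t)\big|\le\mathcal C/r$ uniformly in $z$. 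Substituting this into \eqref{rep} and using that all remaining factors are $z$-independent constants shows each $|w_\alpha(z)|$ is bounded by a finite constant, so $T(\Omega)$ is a bounded subset of $\mathbb C^n$.

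The conceptual heart, and the only step requiring care, is the Cauchy estimate: converting the pointwise domination \eqref{similar} of $|K(z,\zeta)|$ over the neighborhood $U$ into control of the anti-holomorphic derivative $\frac{\partial}{\partial\overline{t_j}}\big|_{t=p}K(z,t)$ by $|K(z,p)|$. Once this is in place the remainder is bookkeeping. I expect the mild subtlety to be the correct interpretation of the Wirtinger derivative $\partial/\partial\overline{t_j}$ as a genuine complex derivative in the variable $u_j=\overline{t_j}$, which is precisely what legitimizes the one-variable Cauchy formula employed above.
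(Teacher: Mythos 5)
Your proof is correct, and its core step is the same as the paper's: a one-variable Cauchy estimate over a circle inside a polydisc $\overline{\mathbb D^n(p;r)}\subset U$, which together with \eqref{similar} yields $\left|\left.\frac{\partial}{\partial\overline{t_j}}\right|_{t=p}K(z,t)\right|\le \frac{\mathcal C}{r}\,\left|K(z,p)\right|$ for all $z\in\Omega$. (The paper runs the integral over the holomorphic function $t\mapsto K(t,z)$, i.e., in the first variable, invoking conjugate symmetry; your substitution $u_j=\overline{t_j}$ is the same computation.) You differ from the paper in two technical points, and both of your choices are simpler. For the zero set $A_p$ of $K(\cdot,p)$, the paper proves the bound on $\Omega\setminus A_p$ and then extends the bounded holomorphic components across the analytic variety $A_p$ by the Riemann removable singularity theorem, whereas you show $A_p=\emptyset$ outright: \eqref{similar} at a zero $z_0$ of $K(\cdot,p)$ forces $K(z_0,\cdot)\equiv 0$ on $U$, hence $K(\cdot,z_0)\equiv 0$ on $\Omega$ by conjugate symmetry and the identity theorem, contradicting $K(z_0,z_0)>0$ (which holds because $\Omega$ admits the Bergman metric). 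This is a genuine, if small, improvement: it shows the hypothesis already makes $T$ well defined and holomorphic on all of $\Omega$, with no extension argument needed. For a general base point $p$, the paper first assumes the normalization $g_{\alpha\bar\beta}(p)=\delta_{\alpha\beta}$ and then reduces the general case via a linear transformation and the transformation rule of the Bergman kernel; you instead note that the matrix entries $g^{\bar j\alpha}(p)$ and the terms $\left.\frac{\partial}{\partial\overline{t_j}}\right|_{t=p}\log K(t,t)$ are $z$-independent constants that can simply be absorbed into the final bound, which is all that boundedness of $T(\Omega)$ requires.
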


\begin{proof}  

We first assume that $\Omega$ satisfies   \eqref{similar}  at some   $p $  with \eqref{normal}.   Let $A_{p}$ be the zero set  of the Bergman kernel $K(\cdot , p)$. 
Then, 
 for each  fixed $ j = 1, \dotsc, n$,
$$
w_{j} (z):=   K^{-1}(z, p)  \left. \frac{\partial}{\partial \overline {t_j} } \right|_{t=p} K(z, t)  -    \left. \frac{\partial}{\partial \overline {t_j} } \right|_{t=p}  \log K(t, t), \quad z \in \Omega  \setminus A_{p}.
 $$
 Take a small polydisc $\mathbb D^n(p; r_p)  \subset U $  for some  $r_p>0 $. For each $ j = 1, \dotsc, n$, by Cauchy's integral formula for derivatives and \eqref{similar},   it holds that
 \begin{align*}
\left|\left. \frac{\partial}{\partial \overline {t_j} } \right|_{t=p} K(z, t) \right| 
& =   \left| \frac{1}{2\pi  i}  \int_{\{|t_j-p_j|= r_p \}}  \frac{ K((p_1, \dotsc,  t_j, \dotsc, p_n), z)  }{(t_j - p_j) ^{2}}   dt_j \right|\\
      &\leq  \frac{1}{2\pi {r_p  }}  \left|  \int_0^{2\pi}    K(  (p_1, \dotsc,  p_j + r_p e^{i \theta}, \dotsc, p_n),  z)      d \theta \right|\\
        & \leq   \frac{1}{  {r_p  }}  \sup_{  U}  |K(  \cdot ,  z)|  \\
        &  \leq   \frac{\mathcal C}{  {r_p  }}    |K(  p,  z)|,  \quad  \forall  z \in \Omega \setminus A_{p}.
\end{align*}
 Therefore,    for any   $z \in \Omega  \setminus A_{p}$,  
 $$
|w_j(z)|    \leq \left | K^{-1}(z, p)   \left. \frac{\partial}{\partial \overline {t_j} } \right|_{t=p} K(z, t)  -    \left. \frac{\partial}{\partial \overline {t_j} } \right|_{t=p}  \log K(t, t)\right |
   \leq        \frac{\mathcal C}{  {r_p  }}   + C_p.
 $$
By the Riemann removable singularity theorem, the bounded holomorphic function  $w_j$ extends across the analytic variety $A_p$ to the whole domain $\Omega$, which completes the proof.
 
\medskip

If  $\Omega$ satisfies    \eqref{similar}  at some general point   $p \in \Omega$, 
 let  
    $g_{\alpha \bar \beta} (p)$ be the positive-definite  Hermitian matrix associated with the Bergman metric at $p$.
One performs a   linear transformation $F$ from $\Omega$ to  $\Omega^1$ such that the Bergman metric $g^1$ on $\Omega_1 $  satisfies 
$
  g^1_{\alpha \bar \beta} (F (p)) =\delta_{\alpha  \beta }.
  $
 Since $F$ is a biholomorphism,  the Bergman kernels on $\Omega$ and $\Omega^1$  differ by a multiple constant,
  which is the determinant square of  $g_{\alpha \bar \beta} (p)$, due to the transformation rule. Therefore, $\Omega^1$ satisfies    \eqref{similar} with respect to the  point   $F (p) $. By the previous argument,   the Bergman representative coordinate $T^1 (z)  $   relative to $F (p)$  maps $\Omega^1$  holomorphically  to a bounded domain in $ \mathbb C^n$.    Finally, the composition map $F^{-1}\circ T^1 \circ F$ is the Bergman representative coordinate $T (z)  $ relative to $ p$ and it is 
  holomorphic  from $\Omega$  to a bounded domain in $ \mathbb C^n$.

 \end{proof} 

\medskip

Using Proposition \ref{Bergman bounded} and the authors'   results in \cite{DWo}, we will prove Theorem \ref{kernel similar}.
 
\medskip

\begin{proof}  [{\bf Proof of Theorem \ref{kernel similar}}]
Assume that the  holomorphic sectional curvature of the Bergman metric $g$ is identically  $-c^2$.
For any $p \in \Omega$, we take  a small enough convex neighborhood  $U_p $ such that   the set $T (U) $ is contained in  a ball  $\mathcal B$  defined in \eqref{ball}.
For any  $\zeta \in U_p$,
by the Cauchy-Schwarz inequality and mean value theorem, there exists a point $\eta = s\zeta + (1-s) p  \in U_p$, where $s\in (0, 1)$,
such that
 $$
|K(\zeta, t) - K(p, t)| \leq |\left. \nabla_{ {w}} \right|_{w= \eta}  K(w, t) | \cdot  |\zeta-p|,
$$
 where
 $\nabla_w   := (\frac{\partial  }{\partial   {w_1} } , ... ,  \frac{\partial  }{\partial   {w_n} } )$ denotes the complex gradient operator.
 
 \medskip
 
By \cite{DWo} and   \eqref{less} in the proof of Proposition \ref{upper bound},    it holds for any $\zeta \in U_p$ and any $ z\in \Omega$ that
\begin{align*}
 |K(\zeta, z)| & \leq  | K(p, z)| +  \sqrt{  \sum_{j=1}^n \left|\left. \frac{\partial}{\partial   {w_j} } \right|_{w=\eta} K(w, z) \right| ^2} \cdot  |\zeta-p| \\
& \leq  | K(p, z)| +  \sqrt{  n }  C_\eta       |K(  \eta,  z) |    \cdot  |\zeta-p|,
\end{align*}
where 
$$
C_\eta:=\sum_{j =1}^n  \left|    \left. \frac{\partial}{\partial   {z_j} } \right|_{z=\eta}  \log K(z, z) \right| +  n {\sqrt 2}{c^{-1}}   
$$
is   a finite positive constant depending on $\eta$. Thus,  $
C_{U_p} :=  \sup_{  \zeta \in  {U_p}} C_\zeta  >0
$
is also a finite constant as   the Bergman kernel is locally uniformly bounded.  Choosing a smaller neighborhood $U_1$ of $p $ such that  $ \sqrt{  n }  C_{U_p}     |\zeta-p| <\frac{1}{2}$ 
 whenever  $\zeta \in U_1$, we will get
 $$
 |K(\zeta, z)|   \leq  | K(p, z)| + \frac{1}{2}     |K( \eta,  z) | \leq | K(p, z)| + \frac{1}{2}  \sup_{U_1}   |K( \cdot,  z) |, \quad \forall  \zeta \in U_1, \, \forall z \in \Omega.
$$
Since the above right hand side is independent of $\zeta$, it follows that
 $$
\sup_{U_1}   |K(\cdot, z)| \leq 2 | K(p, z)|,  \quad \forall    z \in \Omega.
$$
 For simplicity, still denote $U_1$ by $U_p$, and we have completed the proof.

\end{proof}

\medskip

  We remark that \eqref {2 similar} indeed holds on symmetric bounded domains, cf. \cite[Proposition 2.2]{DLT}.  
  For general simply-connected complete  K\"{a}hler manifolds with sectional curvatures bounded between negative constants, one expects a good hold of the Bergman kernel as pointed out by Greene and Wu in \cite[Chap. 8]{GW}.
  
  \medskip

   \subsection*{Statements and Declarations}

No financial or non-financial interests that are directly or indirectly related to the work submitted for publication was reported by the authors. 

Data sharing not applicable to this article as no datasets were generated or analysed during the current study.

  \subsection*{Funding}
The research of the first author was partially supported by an AMS-Simons Travel Grant and NSF Grant DMS-2103608.

\subsection*{Acknowledgements}
{\fontsize{11.5}{10}\selectfont

The authors thank the referee for the thoughtful and detailed report. The authors are grateful to Song-Ying Li for his insightful comments on this paper. The first author is grateful to Xiaojun Huang for valuable conversations during the Northeast Workshop in Geometric Analysis (NEWGA) held at UConn in 2022.}

\bibliographystyle{alphaspecial}

\fontsize{11}{9}\selectfont

\vspace{0.5cm}

 \noindent xindong.math@outlook.com

\vspace{0.2 cm}

\noindent Department of Mathematics, University of Connecticut, Stamford, CT 06901-2315, USA

\vspace{0.4cm}

\noindent wong@math.ucr.edu,

\vspace{0.2 cm}

\noindent Department of Mathematics, University of California, Riverside, CA 92521-0429, USA

 \end{document}